\newtheorem{theorem}{Theorem}
\newtheorem{defi}[theorem]{Definition}
\newtheorem{proposition}[theorem]{Proposition}
\newtheorem{remark}[theorem]{Remark}
\numberwithin{equation}{section}
\begin{document}
	\title[$RLL$-Realization and its Hopf superalgebra structure of
	$U_{p, q}(\widehat{\mathfrak{gl}}(m|n))$]{$RLL$-Realization and its Hopf superalgebra structure of
		$U_{p, q}(\widehat{\mathfrak{gl}}(m|n))$}	
	\author[N.H. Hu]{Naihong Hu}
	\address{School of Mathematical Sciences, MOE Key Laboratory of Mathematics and Engineering Applications \& Shanghai Key Laboratory of PMMP, East China Normal University, Shanghai 200241, China}
	\email{nhhu@math.ecnu.edu.cn}
	
	\author[N. Jing]{Naihuan Jing}
	\address{Department of Mathematics, North Carolina State University, Raleigh, NC 27695, USA}
	\email{jing@ncsu.edu}
	
	\author[Xin Zhong]{Xin Zhong$^*$}
	\address{School of Mathematical Sciences, MOE Key Laboratory of Mathematics and Engineering Applications \& Shanghai Key Laboratory of PMMP, East China Normal University, Shanghai 200241, China}
	\email{52215500002@stu.ecnu.edu.cn}
	
	\thanks{$^*$Corresponding author.\\
		The paper is supported by the NNSFC (Grant Nos.
		12171155, 12171303) and in part by the Science and Technology Commission of Shanghai Municipality (No. 22DZ2229014)
		and Simons Foundation TSM-00002518}
	\begin{abstract}
		In this paper, we extend the Reshetikhin-Semenov-Tian-Shansky formulation of quantum affine algebras to
the two-parameter quantum affine superalgebra $U_{p, q}(\widehat{\mathfrak{gl}}(m|n))$
and obtain its Drinfeld realization. We also derive its Hopf algebra structure by providing
 Drinfeld-type coproduct for the Drinfeld generators.
	\end{abstract}
	
	\keywords{Basic $R$-matrix; Drinfeld realization; $RLL$-formulation; quantum affine super algebra; Gauss decomposition; Hopf algebra structure}
	\subjclass[2010]{Primary 17B37; Secondary  16T05}
	\maketitle
	
	
	\section{Introduction}
	The quantum affine algebras $U_q(\widehat{\mathfrak{g}})$ are the quantum enveloping algebras of the affine Kac-Moody algebras $\widehat{\mathfrak{g}}$ introduced by Drinfeld and Jimbo using the Chevalley generators and $q$-Serre relations. There are two other presentations of $U_q(\widehat{\mathfrak{g}})$: the Faddeev-Reshetikhin-Takhtajan formalism or the $R$-matrix realization given by Reshetikhin and Semenov-Tian-Shansky \cite{RS} using the spectral-parameter-dependent Yang-Baxter equation in the framework of the quantum inverse scattering method \cite{FRT, TF}. The third realization is Drinfeld's new realization, a $q$-analogue of the current algebra realization of the classical affine Lie algebra \cite{D}.
	
	The isomorphism between the Drinfeld realization and the Drinfeld-Jimbo presentation for the quantum affine algebra and the Yangian
	was announced by Drinfeld \cite{D} and a detailed construction of the isomorphism for $U_q(\widehat{\mathfrak g})$ was given by Beck \cite{B}.
	It was shown by  Ding and Frenkel that they are also equivalent to the $R$-matrix formalism
	for the quantum affine algebra in type $A$ by using the Gauss decomposition, which also provides a natural way to derive the Drinfeld realization from the $R$-matrix realization of the quantum affine algebra \cite{DF}. Similarly Brundan and Kleshchev proved the isomorphism for the Yangian algebra in type $A$ \cite{BK}. Jing, Liu, and Molev extended this result to types $B_n^{(1)} , C_n^{(1)}, D_n^{(1)}$ both for the Yangians and quantum affine algebras \cite{JLM1, JLM2, JLM3}. We remark that a correspondence between the $R$-matrix formalism and Drinfeld realization was given for the quantum affine algebras of untwisted types in \cite{HM}, and recently the isomorphism was also established for the twisted quantum affine algebra in type $A_{2 n-1}^{(2)}$ \cite{JZL}.
	
	In 2008, Hu, Rosso and Zhang originally defined the two-parameter quantum affine algebra $U_{r, s}(\widehat{\mathfrak{s l}}_n)$ and derived its Drinfeld realization, where they also introduced the quantum affine Lyndon basis \cite{HRZ}. It was the presentation of the basic $R$-matrix of $U_{r,s}(\mathfrak {gl}_n)$ given by Benkart-Witherspoon \cite{BW} that Jing and Liu were able to recover the Drinfeld realization of two-parameter quantum affine algebra $U_{r, s}(\widehat{\mathfrak{g l}}_n)$ via the $RLL$ formalism \cite{JL}. For $B_n^{(1)}$, $C_n^{(1)}, D_n^{(1)}$ types, Hu-Xu-Zhuang, Zhong-Hu-Jing and Zhuang-Hu-Xu recently worked out two-parameter basic $R$-matrices (including finite and affine case) via a representation-theoretic way in the respective cases and obtained the two-parameter versions of their $RLL$-realizations \cite{ZHJ, ZHX, HXZ}.
	
	In \cite{FHS, Zh}, the authors extended the Reshetikhin and Semenov-Tian-Shansky ($RS$) construction to supersymmetric cases. Based on this and the super version of the Ding-Frenkel theorem, they obtained the defining relations for $U_q(\widehat{\mathfrak{gl}}(m|n))$ in terms of super (or graded) current generators (generating functions). In \cite{WLZ}, Wu-Lin-Zhang  extended the investigations of Jing, Liu, and Molev concerning quantum affine algebras in types $(BCD)_n^{(1)}$ to the supersymmetric case and obtained the $R$-matrix presentation of $U_q(\mathfrak{osp}(2 m+1|2 n))$.
	However, few studies have been done for  the two-parameter quantum affine superalgebra $U_{p, q}(\widehat{\mathfrak{gl}}(m|n))$. In particular,
one can ask whether the two-parameter quantum affine superalgebra $U_{p, q}(\widehat{\mathfrak{gl}}(m|n))$
	can be formulated in the Reshetikhin and Semenov-Tian-Shansky construction.

	In this paper, we will obtain the Drinfeld realization of the two-parameter quantum affine superalgebra $U_{p, q}(\widehat{\mathfrak{gl}}(m|n))$ by using the  $RS$ superalgebra and a super analogue of the Gauss decomposition formula. Compared with the standard quantum affine superalgebra $U_q(\widehat{\mathfrak{gl}}(m|n))$, the two-parameter case 
has twice as many group-like generators, 
it requires more effort to control additional terms in the quantum $R$-matrix. As a result there are more
relations from the $RLL$ formalism,  most notably are the Serre relations (\ref{ser rel1 N})-(\ref{ser rel8 N}) and the commutation relations between $X_i^{+}(z)$ (\ref{com rels1 Xi})-(\ref{com rels7 Xi}) with some coefficients involving the factor $qp^{-1}$ (whereas in one-parameter case this degenerates to $1$).

One of the motivations for Drinfeld to introduce quantum groups was finding new examples of noncommutative and noncommutative Hopf algebras.
  A major advantage of the R-matrix formulism of quantum groups is its natural and systematic construction of the Hopf algebra structures, which are often
  noncommutative and noncommutative. In \cite{Zh}, Zhang presented a simple coproduct for the quantum current generators for the Hopf algebra of $U_q(\widehat{\mathfrak{gl}}(m|n))$. Our method extends this construction to the two-parameter situation, and the RLL construction also provides
  a natural Hopf algebra structure for the quantum affine superalgebra $U_{q, p}(\widehat{\mathfrak{gl}}(m|n))$, which is a generalization of Drinfeld's
  coproduct \cite{D}. As in the one-parameter case, this coproduct is also defined over the
  completion of $U_{q, p}(\widehat{\mathfrak{gl}}(m|n))^{\otimes 2}$.
Based on the explicit coproduct formulae of all the Drinfeld generators, we will construct certain Hopf skew-pairing and get the universal $R$-matrix for $U_{p, q}(\widehat{\mathfrak{gl}}(m|n))$ in a forthcoming paper. This will enable us to further establish an explicit isomorphism between the $R$-matrix and Drinfeld presentations of the quantum affine superalgebra in type $A$.
	
	This paper is organized as follows. In section 2, we start with the basic definitions and notations about the $RS$ superalgebra and listing all super $RLL$ formalism. Section 3 is devoted to quantum affine $R$-matrices and RLL relations for $U_{p, q}(\widehat{\mathfrak{gl}}(m|n))$. We first study the case of $N=2$  in details (including $m=0, n=2$, $m=1, n=1$ and $m=0, n=2$), the second step is to treat the case of $N=3$. This leads to the general case of $N$ to study the commutation relations between the Gaussian generators and to deduce the Drinfeld realization of $U_{p, q}(\widehat{\mathfrak{gl}}(m|n))$. In section 4, the coproduct, counit and antipode for the current realization of $U_{p, q}(\widehat{\mathfrak{gl}}(m|n))$ are given, thus establishing a Hopf superalgebra structure of this quantum affine superalgebra.
	
	\section{Preliminaries}
	\subsection{ $RS$ superalgebra and quantum
		affine superalgebras}
	For a given quantum affine superalgebra, let us define the Reshetikhin-Semenov-Tian-Shansky ($RS$) superalgebras.
Let $V$ be a superspace graded by $\mathbb Z_2$, and consider the tensor space $V\otimes V$. The tensor product follows the usual super rule: for homogeneous elements $a, b, a^{\prime}$ $b^{\prime}$
	$$
	(a \otimes b)\left(a^{\prime} \otimes b^{\prime}\right)=(-1)^{[b][a']}\left(a a^{\prime} \otimes b b^{\prime}\right),
	$$
	where $[a] \in \mathbb{Z}_2$ denotes the degree of $a$. Let $P$ be the permutation operator on
the tensor product $V \otimes V$ given by
$$P\left(v_\alpha \otimes v_\beta\right)=(-1)^{[\alpha][\beta]}\left(v_\beta \otimes v_\alpha\right), \qquad \forall\, v_\alpha, v_\beta \in V.
$$

Let $R(z) \in \operatorname{End}(V \otimes V)$ 
be a matrix obeying the weight conservation condition that $R(z)_{\alpha \beta, \alpha^{\prime} \beta^{\prime}}=0$ unless $\left[\alpha^{\prime}\right]+\left[\beta^{\prime}\right]+[\alpha]+[\beta]=0 \bmod 2$, and satisfying the graded Yang-Baxter equation (YBE) in $End(V^{\otimes 3})$:
	\begin{align}\label{e:YBE}
		R_{12}(z) R_{13}(z w) R_{23}(w)=R_{23}(w) R_{13}(z w) R_{12}(z),
	\end{align}
where $R_{12}=R\otimes I$, $R_{23}=1\otimes R$, and $R_{13}=P_{23}(R_{23})$. The R-matrix also satisfies the unitary condition
	\begin{equation}\label{e:R2}
		R_{12}(z) R_{21}\left(z^{-1}\right)=1.
	\end{equation}
and the symmetry relations:	
	\begin{align}\label{e:R3}
		&P_{12} R_{12}(z) P_{12}=R_{21}(z),\\ \label{e:R4}
		&R_{12}\left(\frac{z}{w}\right) R_{21}\left(\frac{w}{z}\right)=1.
	\end{align}
	\begin{defi}\cite{FHS}
		Let $R\left(\frac{z}{w}\right)$ be a $R$-matrix satisfying the graded YBE.  The RS  superalgebra $U(\mathcal{R})$ is generated by the elements $\ell_{ij}^{\pm}(z)$ satisfying the following relations in terms of the matrices $L^{ \pm}(z)=(\ell_{ij}^{\pm}(z))$:
		\begin{align}\label{e:LL1}
			R\left(\frac{z}{w}\right) L_1^{ \pm}(z) L_2^{ \pm}(w) & =L_2^{ \pm}(w) L_1^{ \pm}(z) R\left(\frac{z}{w}\right),\\ \label{e:LL2}
			R\left(\frac{z_{+}}{w_{-}}\right) L_1^{+}(z) L_2^{-}(w) & =L_2^{-}(w) L_1^{+}(z) R\left(\frac{z_{-}}{w_{+}}\right),
		\end{align}
		where $L_1^{ \pm}(z)=L^{ \pm}(z) \otimes 1, L_2^{ \pm}(z)=1 \otimes L^{ \pm}(z)$, and $z_{ \pm}=z q^{ \pm \frac{c}{2}}$.
The expansion direction of $R\left(\frac{z}{w}\right)$ can be chosen in $\frac{z}{w}$ or $\frac{w}{z}$ in \eqref{e:LL1}
and only in $\frac{z}{w}$ in \eqref{e:LL2}.
	\end{defi}
	The superalgebra $U(\mathcal{R})$  is a graded Hopf algebra, its coproduct and antipode are defined by
	\begin{align}\label{e:Hopf1}
		\Delta\left(L^{ \pm}(z)\right)&=L^{ \pm}\left(z q^{ \pm 1 \otimes \frac{c}{2}}\right) \dot{\otimes} L^{ \pm}\left(z q^{\mp \frac{c}{2} \otimes 1}\right),\\
		S\left(L^{ \pm}(z)\right)&=L^{ \pm}(z)^{-1}.
	\end{align}
where the first formula means that $\Delta(l^{\pm}_{ij}(z))=\sum_k l^{\pm}_{ik}(zq^{ \pm 1 \otimes \frac{c}{2}})\otimes l^{\pm}_{kj}(zq^{\mp \frac{c}{2} \otimes 1})$.
	The Ding-Frenkel decomposition theorem is still true in our case, so we can obtain the defining relations of quantum affine superalgebra $U_{p, q}(\widehat{\mathfrak{gl}}(m|n))$.
	
	\begin{proposition} 
\cite{FHS} The generating matrices
		$L^{ \pm}(z)$ can be written uniquely as follows. 
		\begin{equation}
			L^{ \pm}(z)=\left(\begin{array}{cccc}
				1 & \cdots & & 0 \\
				e_{2,1}^{ \pm}(z) & \ddots & & \\
				e_{3,1}^{ \pm}(z) & & & \\
				\vdots & & & \\
				e_{N, 1}^{ \pm}(z) & \cdots & e_{N, N-1}^{ \pm}(z) & 1
			\end{array}\right)\left(\begin{array}{ccc}
				k_1^{ \pm}(z) & \cdots & 0 \\
				\vdots & \ddots & \vdots \\
				0 & \cdots & k_N^{ \pm}(z)
			\end{array}\right)
		\end{equation}
		$$
		\times\left(\begin{array}{ccccc}
			1 & f_{1,2}^{ \pm}(z) & f_{1,3}^{ \pm}(z) & \cdots & f_{1, M}^{ \pm}(z) \\
			\vdots & \ddots & \cdots & & \vdots \\
			& & & & f_{M-1, M}^{ \pm}(z) \\
			0 & & & & 1
		\end{array}\right).
		$$
		where $e_{i, j}^{ \pm}(z), f_{j, i}^{ \pm}(z)$ and $k_i^{ \pm}(z)$ $(i>j)$  are elements in RS superalgebra $U(\mathcal R)$ and $k_i^{ \pm}(z)$ are invertible. Define
		\begin{gather}\label{e:Xpm1}
			X_i^ - (z) = f_{i,i + 1}^ + \left( {{z_ + }} \right) - f_{i,i + 1}^ - \left( {{z_ - }} \right),
			\\ \label{e:Xpm2}
			X_i^ + (z) = e_{i + 1,i}^ + \left( {{z_ - }} \right) - e_{i + 1,i}^ - \left( {{z_ + }} \right).
		\end{gather}
		Then $q^{ \pm c / 2}, X_i^{ \pm}(z), k_j^{ \pm}(z), \  i=1, \ldots, N-1, j=1, \ldots, N$ generate
$U_{p, q}(\widehat{\mathfrak{gl}}(m|n))$.
	\end{proposition}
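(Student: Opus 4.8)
The plan is to prove the statement in two stages: first the existence and uniqueness of the displayed Gauss factorization, and then the assertion that the resulting Gaussian entries generate the whole algebra. Throughout I would follow the super-analogue of the Ding--Frenkel argument \cite{DF} used in \cite{FHS}, the main tool being the calculus of quasideterminants of Gelfand and Retakh, adapted by inserting the Koszul signs $(-1)^{[i][j]}$ dictated by the graded tensor rule.

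For the factorization $L^{\pm}(z) = F^{\pm}(z)\,D^{\pm}(z)\,E^{\pm}(z)$ into lower-unitriangular, diagonal and upper-unitriangular matrices, I would write the entries explicitly as quasideterminants of submatrices of $L^{\pm}(z)$. The diagonal factors are the leading quasi-minors
$$k_i^{\pm}(z) = \bigl| L^{\pm}(z)^{[1,\dots,i]} \bigr|_{ii},$$
the quasideterminant of the top-left $i\times i$ submatrix evaluated at its $(i,i)$ corner, while the off-diagonal entries $e_{ij}^{\pm}(z)$ and $f_{ji}^{\pm}(z)$ are likewise expressible through quasideterminants of $L^{\pm}(z)$ with a single row, respectively column, replaced. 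Existence and uniqueness of such an $FDE$-factorization over a noncommutative ring is then a formal consequence of the invertibility of these leading quasi-minors.

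The step on which everything rests is this invertibility of the $k_i^{\pm}(z)$, which I would verify by passing to the lowest-degree coefficient in $z$: the constant term of $L^{\pm}(z)$ is triangular with the invertible Cartan-type group-like generators on the diagonal, so each leading quasi-minor is invertible, term by term, as a series in the appropriate completion of $U(\mathcal R)$. Generation then follows by a two-sided containment. Multiplying out the product $F^{\pm}D^{\pm}E^{\pm}$ expresses every $\ell_{ij}^{\pm}(z)$ as a polynomial in the Gaussian entries, so the subalgebra they generate already contains all of the $\ell_{ij}^{\pm}(z)$; conversely the quasideterminant formulae express each Gaussian entry through the $\ell_{ij}^{\pm}(z)$ and the adjoined inverses $k_i^{\pm}(z)^{-1}$. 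Since the $\ell_{ij}^{\pm}(z)$ together with the central $q^{\pm c/2}$ generate $U_{p,q}(\widehat{\mathfrak{gl}}(m|n))$ by definition of the RS superalgebra, the two generating sets coincide, and the combinations $X_i^{\pm}(z)$ of \eqref{e:Xpm1}--\eqref{e:Xpm2} are simply the distinguished nearest-off-diagonal current generators built from $e_{i+1,i}^{\pm}$ and $f_{i,i+1}^{\pm}$ at the shifted parameters $z_{\pm}=zq^{\pm c/2}$.

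I expect the main obstacle to be the simultaneous bookkeeping of the $\mathbb{Z}_2$-grading signs and the spectral shifts: one must check that the Koszul signs do not spoil the uniqueness of the decomposition, that the parity of each Gaussian generator is well defined so that the $X_i^{\pm}(z)$ carry a consistent $\mathbb{Z}_2$-degree, and that the invertibility of the $k_i^{\pm}(z)$ genuinely survives in the completed algebra of formal series. Once these sign and completion issues are settled, the remainder is the standard, if lengthy, quasideterminant computation.
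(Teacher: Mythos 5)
The paper itself gives no proof of this proposition: it is quoted from \cite{FHS}, so the relevant comparison is with the Ding--Frenkel-type argument behind that citation, and your quasideterminant route is indeed that standard argument. Your treatment of the factorization half is sound: the entries of $F^{\pm}$, $D^{\pm}$, $E^{\pm}$ as quasi-minors, invertibility of the leading quasi-minors read off from the triangular constant term of $L^{\pm}(z)$, and uniqueness as a formal consequence. One remark: your worry that Koszul signs might spoil uniqueness is a non-issue, because the identity $L^{\pm}(z)=F^{\pm}(z)D^{\pm}(z)E^{\pm}(z)$ is an ordinary matrix identity over the associative algebra $U(\mathcal R)$; in the conventions of Section 2 the grading signs are carried by the $\theta$-conjugation in the \emph{exchange} relations \eqref{RLL1.1}--\eqref{RLL1.2}, not by products of entries of a single matrix.

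The genuine gap is in the generation half: your two-sided containment proves that the \emph{full} family $\{e_{ij}^{\pm}(z), f_{ji}^{\pm}(z), k_i^{\pm}(z)^{\pm1}\}$ generates the same algebra as the $\ell_{ij}^{\pm}(z)$, but the proposition asserts generation by the strictly smaller set $\{q^{\pm c/2}, X_i^{\pm}(z), k_j^{\pm}(z)\}$, and the final sentence of your proposal merely describes the $X_i^{\pm}$ rather than closing this gap. Two further steps are needed. First, one must recover the two halves $f^{+}_{i,i+1}(z)$ and $f^{-}_{i,i+1}(z)$ separately from the single current $X_i^{-}(z)=f_{i,i+1}^{+}(z_+)-f_{i,i+1}^{-}(z_-)$ (and likewise for $X_i^{+}$); this works because the prescribed expansion directions of $L^{+}$ and $L^{-}$ put the two summands in complementary sets of powers of $z$, with the central shifts $z_{\pm}=zq^{\pm c/2}$ harmless since $q^{\pm c/2}$ is adjoined to the generating set --- but this mode-separation argument must be stated, as it is exactly where $q^{\pm c/2}$ enters. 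Second, the non-adjacent entries $e_{ij}^{\pm}, f_{ji}^{\pm}$ with $|i-j|\ge 2$ must be expressed through the adjacent ones; in the Ding--Frenkel scheme this is an induction on $|i-j|$ using identities extracted from the $RLL$ relations, of precisely the shape visible in the paper's own $N=3$ computation \eqref{X1X2 pre rel1}, where $e_{3,1}^{\pm}(z)$ is solved for in terms of $e_1^{\pm}$ and $e_2^{\pm}$. Without these two steps the claimed generating set is not justified by your containment argument.
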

	
	To find other relations, it is convenient to write the inverses of $L^{ \pm}(z)$ using the Gauss decomposition:
	\begin{align}\notag
		L^{ \pm}(z)^{-1}= & \left(\begin{array}{cccc}
			1 & -f_1^{ \pm}(z) & \cdots & \\
			\vdots & \ddots & & \vdots \\
			& & & -f_{N-1}^{ \pm}(z) \\
			0 & \cdots & & 1
		\end{array}\right)\left(\begin{array}{ccc}
			k_1^{ \pm}(z)^{-1} & \cdots & 0 \\
			\vdots & \ddots & \vdots \\
			0 & \cdots & k_N^{ \pm}(z)^{-1}
		\end{array}\right) \\ \label{e:Gauss}
		& \times\left(\begin{array}{cccc}
			1 & \cdots & & 0 \\
			-e_1^{ \pm}(z) & \ddots & & \\
			\vdots & \ddots & & \vdots \\
			\cdots & & -e_{N-1}^{ \pm}(z) & 1
		\end{array}\right).
	\end{align}
	
	\subsection{Ungraded vs. graded multiplication rule \cite{Zh,GZ}}
	Let us define the actions of the matrix elements $R(z)$ and $L^{ \pm}(z)$ on $V\otimes V$ by
	\begin{gather*}
		R(z)\left(v_{\alpha^{\prime}} \otimes v_{\beta^{\prime}}\right)=R(z)_{\alpha \beta, \alpha^{\prime} \beta^{\prime}}\left(v_\alpha \otimes v_\beta\right),
		\\
		L^{ \pm}(z) v_{\alpha^{\prime}}=L^{ \pm}(z)_{\alpha \alpha^{\prime}} v_\alpha.
	\end{gather*}
where we have adopted the Riemann summation convention that the repeated indices represent summation, i.e., the first equation
means that $ R(z)\left(v_{\alpha^{\prime}} \otimes v_{\beta^{\prime}}\right)=\sum_{\alpha, \beta}R(z)_{\alpha \beta, \alpha^{\prime} \beta^{\prime}}\left(v_\alpha \otimes v_\beta\right)$.

	In the matrix form,  it carries extra signs due to the graded multiplication rule of the tensor
	products
	\begin{equation}
		\begin{aligned}
			& R\left(\frac{z}{w}\right)_{\alpha \beta, \alpha^{\prime \prime} \beta^{\prime \prime}} L^{ \pm}(z)_{\alpha^{\prime \prime} \alpha^{\prime}} L^{ \pm}(w)_{\beta^{\prime \prime} \beta^{\prime}}(-1)^{\left[\alpha^{\prime}\right]\left(\left[\beta^{\prime}\right]+\left[\beta^{\prime \prime}\right]\right)} \\
			& \quad=L^{ \pm}(w)_{\beta \beta^{\prime \prime}} L^{ \pm}(z)_{\alpha \alpha^{\prime \prime}} R\left(\frac{z}{w}\right)_{\alpha^{\prime \prime} \beta^{\prime \prime}, \alpha^{\prime} \beta^{\prime}}(-1)^{[\alpha]\left([\beta]+\left[\beta^{\prime \prime}\right]\right)},
			\\
			& R\left(\frac{z_{+}}{w_{-}}\right)_{\alpha \beta, \alpha^{\prime \prime} \beta^{\prime \prime}} L^{+}(z)_{\alpha^{\prime \prime} \alpha^{\prime}} L^{-}(w)_{\beta^{\prime \prime} \beta^{\prime}}(-1)^{\left[\alpha^{\prime}\right]\left(\left[\beta^{\prime}\right]+\left[\beta^{\prime \prime}\right]\right)} \\
			& \quad=L^{-}(w)_{\beta \beta^{\prime \prime}} L^{+}(z)_{\alpha \alpha^{\prime \prime}} R\left(\frac{z_{-}}{w_{+}}\right)_{\alpha^{\prime \prime} \beta^{\prime \prime}, \alpha^{\prime} \beta^{\prime}}(-1)^{[\alpha]\left([\beta]+\left[\beta^{\prime \prime}\right]\right)}.
		\end{aligned}
	\end{equation}
	We introduce the matrix $\theta$ as follows:
	$$
	\theta_{\alpha \beta, \alpha^{\prime} \beta^{\prime}}=(-1)^{[\alpha][\beta]} \delta_{\alpha \alpha^{\prime}} \delta_{\beta \beta^{\prime}}.
	$$
	With the help of this matrix $\theta$,  the usual matrix equations become
	\begin{align}
		\label{RLL1.1}
		&R\left(\frac{z}{w}\right) L_1^{ \pm}(z) \theta L_2^{ \pm}(w) \theta=\theta L_2^{ \pm}(w) \theta L_1^{ \pm}(z) R\left(\frac{z}{w}\right),\\
		\label{RLL1.2}
		&R\left(\frac{z_{+}}{w_{-}}\right) L_1^{+}(z) \theta L_2^{-}(w) \theta=\theta L_2^{-}(w) \theta L_1^{+}(z) R\left(\frac{z_{-}}{w_{+}}\right).
	\end{align}
	
	Note that the tensor products above don't necessarily care about gradings.
	
	It is easy to deduce the following matrix equations from the $RLL$ relations:
	\begin{align}	
	\label{RLL2.1}
	& R_{21}\left(\frac{z}{w}\right) \theta L_2^{ \pm}(z) \theta L_1^{ \pm}(w)=L_1^{ \pm}(w) \theta L_2^{ \pm}(z) \theta R_{21}\left(\frac{z}{w}\right), \\ 	
	& R_{21}\left(\frac{z_{-}}{w_{+}}\right) \theta L_2^{-}(z) \theta L_1^{+}(w)=L_1^{+}(w) \theta L_2^{-}(z) \theta R_{21}\left(\frac{z_{+}}{w_{-}}\right),\\
	& \theta L_2^{ \pm}(z)^{-1} \theta L_1^{ \pm}(w)^{-1} R_{21}\left(\frac{z}{w}\right)=R_{21}\left(\frac{z}{w}\right) L_1^{ \pm}(w)^{-1} \theta L_2^{ \pm}(z)^{-1} \theta ,\\
	\label{RLL2.4}
	& \theta L_2^{+}(z)^{-1} \theta L_1^{-}(w)^{-1} R_{21}\left(\frac{z_{+}}{w_{-}}\right)=R_{21}\left(\frac{z_{-}}{w_{+}}\right) L_1^{-}(w)^{-1} \theta L_2^{+}(z)^{-1} \theta, \\
	\label{RLL2.5}
	& L_1^{ \pm}(w)^{-1} R_{21}\left(\frac{z}{w}\right) \theta L_2^{ \pm}(z) \theta=\theta L_2^{ \pm}(z) \theta R_{21}\left(\frac{z}{w}\right) L_1^{ \pm}(w)^{-1} ,\\
	& L_1^{-}(w)^{-1} R_{21}\left(\frac{z_{+}}{w_{-}}\right) \theta L_2^{+}(z) \theta=\theta L_2^{+}(z) \theta R_{21}\left(\frac{z_{-}}{w_{+}}\right) L_1^{-}(w)^{-1}, \\
	\label{RLL2.7}
	& L_1^{+}(w)^{-1} R_{21}\left(\frac{z_{-}}{w_{+}}\right) \theta L_2^{-}(z) \theta=\theta L_2^{-}(z) \theta R_{21}\left(\frac{z_{+}}{w_{-}}\right) L_1^{+}(w)^{-1},
\end{align}
where $R_{21}\left(\frac{z}{w}\right)=P R_{12}\left(\frac{z}{w}\right) P$.
	\section{Drinfeld realization of $U_{p, q}(\widehat{\mathfrak{gl}}(m|n))$}
	\subsection{Drinfeld current realization of $U_{p, q}(\widehat{\mathfrak{gl}}(m|n))$}
Let $m, n \geq 1$ be fixed positive integers and let $I=\{1,2, \ldots, m+n-1\}$.
	Following \cite{Z}, let $V$ be a $(m+n)$-dimensional graded vector space with the even basis vectors $\left\{v^1, v^2, \ldots, v^m\right\}$ and the odd basis vectors $\left\{v^{m+1}, v^{m+2}, \ldots, v^{m+n}\right\}$. Then the basic $R$-matrix of  $U_{p, q}(\mathfrak{gl}(m|n))$ can be written as
	\begin{equation}
		\begin{aligned}
			R&=(-1)^{[i]}\Bigl(\sum_{i \leq m} E_{i i} \otimes E_{i i}+p q \sum_{i>m} E_{i i} \otimes E_{i i}\Bigr)\\
			&\quad +(-1)^{[i][j]}\Bigl(p \sum_{i>j} E_{i j} \otimes E_{j i}+q \sum_{i<j} E_{i j} \otimes E_{j i}\Bigr)	\\
			&\quad+(1-p q) \sum_{i<j} E_{j j} \otimes E_{i i}.	
		\end{aligned}
	\end{equation}
	
	 The Cartan matrix of the Lie superalgebra $\mathfrak{gl}(m|n)$
is the $|I| \times|I|$ matrix $C=\left(a_{i j}\right)$ given by $a_{i j}=\left(1+(-1)^{\delta_{i, m}}\right) \delta_{i, j}-\delta_{i, j+1}-(-1)^{\delta_{i, m}} \delta_{i, j-1}$. 
	\begin{defi} \cite{CY}
		The two-parameter quantum superalgebra $U_{p,q^{-1}}(\mathfrak{gl}(m|n))$ is the $\mathbb{Z}_2$-graded associative algebra generated by $e_i, f_i, a_i^{ \pm 1}, b_i^{ \pm 1}$ $(i \in I)$ satisfying the following relations:
		\begin{align*}
			& a_i a_i^{-1}=b_j b_j^{-1}=1 ; \\
			& a_i^{ \pm 1} a_j^{ \pm 1}=a_j^{ \pm 1} a_i^{ \pm 1},\quad b_i^{ \pm 1} b_j^{ \pm 1}=b_j^{ \pm 1} b_i^{ \pm 1},\quad a_i^{ \pm 1} b_j^{ \pm 1}=b_j^{ \pm 1} a_i^{ \pm 1} . \\
			& \text { If } \ 1 \leq i<m, \text { or } \ i=m, j=m-1, \\
			& a_i e_j=p^{\left\langle\varepsilon_i, \alpha_j\right\rangle} e_j w_i, \quad a_i f_j=p^{-\left\langle\varepsilon_i, \alpha_j\right\rangle}  f_j w_i; \\
			& b_i e_j=q^{-\left\langle\varepsilon_i, \alpha_j\right\rangle} e_j w_i, \quad b_i f_j=q^{\left\langle\varepsilon_i, \alpha_j\right\rangle}  f_j w_i. \\
			& \text { If } \ m+1 \leq i \leq m+n-1, \text { or } \ i=m, j=m+1, \\
			& a_i e_j=p^{\left\langle\varepsilon_{i+1}, \alpha_j\right\rangle} e_j w_i, \quad a_i f_j=p^{-\left\langle\varepsilon_{i+1}, \alpha_j\right\rangle}f_j w_i ; \\
			& b_i e_j=q^{-\left\langle\varepsilon_{i+1}, \alpha_j\right\rangle} e_j w_i, \quad b_i f_j=q^{\left\langle\varepsilon_{i+1}, \alpha_j\right\rangle}f_j w_i. \\
			& a_m e_m=e_m a_m, \quad a_m f_m=f_m a_m; \\
			& b_m e_m=e_m b_m, \quad b_m f_m=f_m b_m. \\
			& e_i f_j-(-1)^{p\left(e_i\right) p\left(f_j\right)} f_j e_i=\delta_{i, j} \frac{a_ib_{i+1}-a_{i+1}b_i}{p-q^{-1}},  \quad(i, j \in I) ; \\
			& e_i e_j-e_j e_i=0, \quad f_i f_j-f_j f_i=0, \quad|i-j| \geq 2 ;
		\end{align*}
		$$
		\begin{aligned}
			& e_i^2 e_{i+1}-(p+q^{-1}) e_i e_{i+1} e_i+pq^{-1}e_{i+1} e_i^2=0 \quad(i \neq m) ; \\
			& e_i^2 e_{i-1}-\left(p^{-1}+q\right) e_i e_{i-1} e_i+p^{-1}q e_{i-1} e_i^2=0 \quad(i \neq m) ; \\
			& f_i^2 f_{i+1}-\left(p^{-1}+q\right) f_i f_{i+1} f_i+p^{-1} q f_{i+1} f_i^2=0 \quad(i \neq m) ; \\
			& f_i^2 f_{i-1}-(p+q^{-1}) f_i f_{i-1} f_i+pq^{-1} f_{i-1} f_i^2=0 \quad(i \neq m) ; \\
			& e_m^2=f_m^2=0 ; \\
			& e_m e_{m-1} e_m e_{m+1}+pq^{-1} e_m e_{m+1} e_m e_{m-1}+e_{m-1} e_m e_{m+1} e_m \\
			& \quad+pq^{-1} e_{m+1} e_m e_{m-1} e_m-(p+q^{-1}) e_m e_{m-1} e_{m+1} e_m=0 ; \\
			& f_m f_{m-1} f_m f_{m+1}+p^{-1}q f_m f_{m+1} f_m f_{m-1}+f_{m-1} f_m f_{m+1} f_m \\
			& \quad+p^{-1}q f_{m+1} f_m f_{m-1}f_m
			-\left(p^{-1}+q\right) f_m f_{m-1} f_{m+1} f_m=0.
		\end{aligned}
		$$
		Here the $\mathbb{Z}_2$-grading function (parity) $p(x)$ is given by $p(a_i^{ \pm 1})=p(b_i^{ \pm 1})=0$ $(i \in I)$, $p\left(e_i\right)=p\left(f_i\right)=0$ $(i \neq m)$ and $p\left(e_m\right)=p\left(f_m\right)=1$.
	\end{defi}

	Using the Yang-Baxterization or Jimbo's method \cite{J}, we have the following quantum affine  $R$-matrix given by
	\begin{align}\notag
		&\tilde{R}_{12}(z)=  \sum_{i=1}^m E_{i i} \otimes E_{i i}+\frac{q{-}z p^{-1}}{z q{-}p^{-1}} \sum_{i=m+1}^{m+n} E_{i i} \otimes E_{i i}+\frac{(z{-}1) q p^{-1}}{z q{-}p^{-1}} \sum_{i<j} E_{i i} \otimes E_{j j}\\ \label{Basic R}
		& +\frac{z{-}1}{z q{-}p^{-1}} \sum_{i>j} E_{i i} \otimes E_{j j}+\frac{z(q{-}p^{-1})}{q{-}p^{-1}}(z\sum_{i<j} E_{i j} \otimes E_{j i}+\sum_{i>j} E_{i j} \otimes E_{j i}),
	\end{align}
	and
	$$
	R_{12}(z)_{\alpha \beta}^{\alpha^{\prime} \beta^{\prime}}=(-1)^{[\alpha][\beta]} \tilde{R}(z)_{\alpha \beta}^{\alpha^{\prime} \beta^{\prime}}.
	$$
	For the $N=m+n=2$ case, 
$L^{ \pm}(z)$ and $L^{ \pm}(z)^{-1}$ can be written explicitly as
	\begin{equation}
		L^{ \pm}(z)=\left(\begin{array}{cc}
			k_1^{ \pm}(z) & k_1^{ \pm}(z) f_1^{ \pm}(z) \\
			e_1^{ \pm}(z) k_1^{ \pm}(z) & k_2^{ \pm}(z)+e_1^{ \pm}(z) k_1^{ \pm}(z) f_1^{ \pm}(z)
		\end{array}\right),
	\end{equation}
	
	\begin{equation}
		L^{ \pm}(z)^{-1}=\left(\begin{array}{cc}
			k_1^{ \pm}(z)^{-1}+f_1^{ \pm}(z) k_2^{ \pm}(z)^{-1} e_1^{ \pm}(z) & -f_1^{ \pm}(z) k_2^{ \pm}(z)^{-1} \\
			-k_2^{ \pm}(z)^{-1} e_1^{ \pm}(z) & k_2^{ \pm}(z)^{-1}
		\end{array}\right) .
	\end{equation}
	There are three different internal block types.
	For $m=2, n=0$,
	$${\text{ Type }}1: \ {R_{{\text{12}}}}\left( {\frac{z}{w}} \right) = \left( {\begin{array}{*{20}{c}}
			1&0&0&0 \\
			0&{\frac{{(z - w)q{p^{ - 1}}}}{{zq - w{p^{ - 1}}}}}&{\frac{{z\left( {q - {p^{ - 1}}} \right)}}{{zq - w{p^{ - 1}}}}}&0 \\
			0&{\frac{{w\left( {q - {p^{ - 1}}} \right)}}{{zq - w{p^{ - 1}}}}}&{\frac{{z - w}}{{zq - w{p^{ - 1}}}}}&0 \\
			0&0&0&1
	\end{array}} \right){\text{. }}$$
	
	For $m=1, n=1$,
	\begin{equation}{\text{ Type 2}}: \ {R_{{\text{12}}}}\left( {\frac{z}{w}} \right) = \left( {\begin{array}{*{20}{c}}
				1&0&0&0 \\
				0&{\frac{{(z - w)q{p^{ - 1}}}}{{zq - w{p^{ - 1}}}}}&{\frac{{z\left( {q - {p^{ - 1}}} \right)}}{{zq - w{p^{ - 1}}}}}&0 \\
				0&{\frac{{w\left( {q - {p^{ - 1}}} \right)}}{{zq - w{p^{ - 1}}}}}&{\frac{{z - w}}{{zq - w{p^{ - 1}}}}}&0 \\
				0&0&0&{ - \frac{{wq - z{p^{ - 1}}}}{{zq - w{p^{ - 1}}}}}
		\end{array}} \right){\text{.}}\end{equation}
	
	For $m=0, n=2$,
	$${\text{ Type 3}}: \ {R_{{\text{12}}}}\left( {\frac{z}{w}} \right) = \left( {\begin{array}{*{20}{c}}
			{ - \frac{{wq - z{p^{ - 1}}}}{{zq - w{p^{ - 1}}}}}&0&0&0 \\
			0&{\frac{{(z - w)q{p^{ - 1}}}}{{zq - w{p^{ - 1}}}}}&{-\frac{{z\left( {q - {p^{ - 1}}} \right)}}{{zq - w{p^{ - 1}}}}}&0 \\
			0&{-\frac{{w\left( {q - {p^{ - 1}}} \right)}}{{zq - w{p^{ - 1}}}}}&{\frac{{z - w}}{{zq - w{p^{ - 1}}}}}&0 \\
			0&0&0&{ - \frac{{wq - z{p^{ - 1}}}}{{zq - w{p^{ - 1}}}}}
	\end{array}} \right){\text{. }}$$
	Let's focus on type one: $m=1,$ $n=1$.  Let
	$$
	\theta=\left(\begin{array}{cccc}
		1 & 0 & 0 & 0 \\
		0 & 1 & 0 & 0 \\
		0 & 0 & 1 & 0 \\
		0 & 0 & 0 & -1
	\end{array}\right).
	$$
	By \eqref{RLL2.1}-\eqref{RLL2.7} and \eqref{e:Xpm1}-\eqref{e:Xpm2}, we obtain the following relations among the components
	\begin{gather}
		k_i^{ \pm}(z) k_j^{ \pm}(w)=k_j^{ \pm}(w) k_i^{ \pm}(z), \quad i=1,2,\\
		\frac{z_{ \pm}-w_{\mp}}{z_{ \pm} q-w_{\mp} p^{-1}} k_2^{\mp}(w)^{-1} k_1^{ \pm}(z)=k_1^{ \pm}(z) k_2^{\mp}(w)^{-1} \frac{z_{\mp}-w_{ \pm}}{z_{\mp} q-w_{ \pm} p^{-1}},\\
		k_1^{+}(z) k_1^{-}(w)=k_1^{-}(w) k_1^{+}(z),\\
		\frac{w_{-} q-p^{-1} z_{+}}{z_{+} q-w_{-} p^{-1}} k_2^{+}(z) k_2^{-}(w)=\frac{w_{+} q-p^{-1} z_{-}}{z_{-} q-w_{+} p^{-1}} k_2^{-}(w) k_2^{+}(z),\\
k_1^{ \pm}(z)^{\epsilon} X_1^{\epsilon}(w) k_1^{ \pm}(z)^{-\epsilon}=\frac{z_{\pm\epsilon} q-w p^{-1}}{\left(z_{\pm\epsilon}-w\right) q p^{-1}} X_1^{\epsilon}(w),\\
\label{X1X1 rel1 type2}
		X_1^{\pm}(z) X_1^{\pm}(w)+X_1^{\pm}(w) X_1^{\pm}(z)=0,\\
		k_2^{ \pm}(z) X_1^{\epsilon}(w) k_2^{ \pm}(z)^{-\epsilon}=\frac{z_{ \pm \epsilon} p^{-1}-wq }{\left(z_{ \pm \epsilon}-w\right) q p^{-1}} X_1^{+}(w),\\
		\left\{X_1^{+}(z), X_1^{-}(w)\right\}=\left(p-q^{-1}\right)\Bigl\{\delta\left(z w^{-1} q^{-c}\right) k_1^{+}\left(w_{+}\right)^{-1} k_2^{+}\left(w_{+}\right)\\
		\qquad\qquad\qquad\qquad-\delta\left(z^{-1} w q^{-c}\right) k_1^{-}\left(z_{+}\right)^{-1} k_2^{-}\left(z_{+}\right)\Bigr\}.\notag
	\end{gather}
	Similarly for $m=2,$  $n=0$, 
the main relations are as follows. 
	\begin{gather}
		k_i^{ \pm}(z) k_j^{ \pm}(w)=k_j^{ \pm}(w) k_i^{ \pm}(z),\\
		k_i^{+}(z) k_i^{-}(w)=k_i^{-}(w) k_i^{+}(z), \quad i=1,2,\\
		\frac{z_{ \pm}-w_{\mp}}{z_{ \pm} p-w_{\mp} q^{-1}} k_2^{\mp}(w)^{-1} k_1^{ \pm}(z)=\frac{z_{\mp}-w_{ \pm}}{z_{\mp} p-w_{ \pm} q^{-1}} k_1^{ \pm}(z) k_2^{\mp}(w)^{-1},\\
k_1^{ \pm}(z)^{-1} X_1^{\epsilon}(w) k_1^{ \pm}(z)=\frac{z p-w_{\mp\epsilon} q^{-1}}{z-w_{\mp\epsilon}} X_1^{\epsilon}(w),\\
		\label{X1X1 rel1 type1}
		\left(z q^{-1}-w p\right) X_1^{-}(z) X_1^{-}(w)=\left(z p-w q^{-1}\right) X_1^{-}(w) X_1^{-}(z),\\
		\label{X1X1 rel2 type1}
		\left(z p-w q^{-1}\right) X_1^{+}(z) X_1^{+}(w)=\left(z q^{-1}-w p\right) X_1^{+}(w) X_1^{+}(z),\\
k_2^{ \pm}(w)^{-1} X_1^{\epsilon}(z) k_2^{ \pm}(w)=(\frac{z_{ \mp\epsilon}-w}{z_{ \mp\epsilon} p-w q^{-1}})^{\epsilon} X_1^{\epsilon}(z),\\
		\left[X_1^{+}(z), X_1^{-}(w)\right]=-\left(p-q^{-1}\right)\Bigl\{\delta\left(z^{-1} w q^c\right) k_1^{+}\left(w_{+}\right)^{-1} k_2^{+}\left(w_{+}\right)\\
		\qquad\qquad\qquad\qquad-\delta\left(z^{-1} w q^{-c}\right) k_1^{-}\left(z_{+}\right)^{-1} k_2^{-}\left(z_{+}\right)\Bigr\}.\notag
	\end{gather}
	For $m=0,$  $n=2$, the relations are as follows.
	\begin{gather}
		k_i^{ \pm}(z) k_j^{ \pm}(w)=k_j^{ \pm}(w) k_i^{ \pm}(z), \quad i=1,2\\
		\frac{w_{\mp} q-p^{-1} z_{\pm}}{z_{\pm} q-w_{\mp} p^{-1}} k_i^{\pm}(z) k_i^{\mp}(w)=\frac{w_{\pm} q-p^{-1} z_{\mp}}{z_{\mp} q-w_{\pm} p^{-1}} k_i^{\mp}(w) k_i^{\pm}(z), \quad i=1,2\\
		\frac{z_{ \pm}-w_{\mp}}{z_{ \pm} p-w_{\mp} q^{-1}} k_2^{\mp}(w)^{-1} k_1^{ \pm}(z)=\frac{z_{\mp}-w_{ \pm}}{z_{\mp} p-w_{ \pm} q^{-1}} k_1^{ \pm}(z) k_2^{\mp}(w)^{-1},\\
k_1^{ \pm}(z)^{\epsilon} X_1^{\epsilon}(w) k_1^{ \pm}(z)^{-\epsilon}=\frac{z_{\pm\epsilon} p^{-1}-w q}{\left(z_{\pm\epsilon}-w\right) q p^{-1}} X_1^{\epsilon}(w),\\
		\label{X1X1 rel1 type3}
		\left(z p-w q^{-1}\right) X_1^{-}(z) X_1^{-}(w)=\left(z q^{-1}-w p\right) X_1^{-}(w) X_1^{-}(z),\\
		\label{X1X1 rel2 type3}
		\left(z q^{-1}-w p\right) X_1^{+}(z) X_1^{+}(w)=\left(z p-w q^{-1}\right) X_1^{+}(w) X_1^{+}(z),\\
k_2^{ \pm}(z)^{\epsilon} X_1^{\epsilon}(w) k_2^{ \pm}(z)^{-\epsilon}=\frac{z_{\pm\epsilon} q-w p^{-1}}{\left(z_{\pm\epsilon}-w\right) q p^{-1}} X_1^{\epsilon}(w),\\
		\left[X_1^{+}(z), X_1^{-}(w)\right]=-\left(p-q^{-1}\right)\Bigl\{\delta\left(z^{-1} w q^c\right) k_1^{+}\left(w_{+}\right)^{-1} k_2^{+}\left(w_{+}\right)\\
		\qquad\qquad\qquad\quad	-\delta\left(z^{-1} w q^{-c}\right) k_1^{-}\left(z_{+}\right)^{-1} k_2^{-}\left(z_{+}\right)\Bigr\}.\notag
	\end{gather}
	
	We will extend the results from $N=2$ to $N=3$, and then to general $N$.
	
	$N=3$: we take $m=2, n=1$ as an example. The calculations of $R$-matrices for the other cases are similar. It follows from
\eqref{Basic R} that the $R$-matrix for the case $m=2, n=1$ is given as follows. 
	\begin{equation}
		\begin{split}	
			&{R_{21}}\Bigl(\frac{z}{w}\Bigr) =\\
			& \left( {\begin{array}{*{20}{c}}
					1&0&0&0&0&0&0&0&0 \\
					0&{\frac{{z - w}}{{zq - w{p^{ - 1}}}}}&0&{\frac{{w\left( {q - {p^{ - 1}}} \right)}}{{zq - w{p^{ - 1}}}}}&0&0&0&0&0 \\
					0&0&{\frac{{z - w}}{{zq - w{p^{ - 1}}}}}&0&0&0&{\frac{{w\left( {q - {p^{ - 1}}} \right)}}{{zq - w{p^{ - 1}}}}}&0&0 \\
					0&{\frac{{z\left( {q - {p^{ - 1}}} \right)}}{{zq - w{p^{ - 1}}}}}&0&{\frac{{(z - w)q{p^{ - 1}}}}{{zq - w{p^{ - 1}}}}}&0&0&0&0&0 \\
					0&0&0&0&1&0&0&0&0 \\
					0&0&0&0&0&{\frac{{z - w}}{{zq - w{p^{ - 1}}}}}&0&{\frac{{w\left( {q - {p^{ - 1}}} \right)}}{{zq - w{p^{ - 1}}}}}&0 \\
					0&0&{\frac{{z\left( {q - {p^{ - 1}}} \right)}}{{zq - w{p^{ - 1}}}}}&0&0&0&{\frac{{(z - w)q{p^{ - 1}}}}{{zq - w{p^{ - 1}}}}}&0&0 \\
					0&0&0&0&0&{\frac{{z\left( {q - {p^{ - 1}}} \right)}}{{zq - w{p^{ - 1}}}}}&0&{\frac{{(z - w)q{p^{ - 1}}}}{{zq - w{p^{ - 1}}}}}&0 \\
					0&0&0&0&0&0&0&0&{ - \frac{{wq - z{p^{ - 1}}}}{{zq - w{p^{ - 1}}}}}
			\end{array}} \right).
		\end{split}
	\end{equation}
	We first divide the $N=3$  case into two $N=2$  cases by decomposing the $R$-matrix into a type 1 $R$-matrix $(1 \leqslant i, j \leqslant 2)$ and type 2 $R$-matrix $(2 \leqslant i, j \leqslant 3)$. Their matrices $L^{ \pm}(z)$ can be written respectively as
	\begin{gather}
		L^{ \pm}(z)=\left(\begin{array}{cc}
			1 & 0 \\
			e_1^{ \pm}(z) & 1
		\end{array}\right)\left(\begin{array}{cc}
			k_1^{ \pm}(z) & 0 \\
			0 & k_2^{ \pm}(z)
		\end{array}\right)\left(\begin{array}{cc}
			1 & f_1^{ \pm}(z) \\
			0 & 1
		\end{array}\right),
	\end{gather}
	\begin{gather}
		L^{ \pm}(z)=\left(\begin{array}{cc}
			1 & 0 \\
			e_2^{ \pm}(z) & 1
		\end{array}\right)\left(\begin{array}{cc}
			k_2^{ \pm}(z) & 0 \\
			0 & k_3^{ \pm}(z)
		\end{array}\right)\left(\begin{array}{cc}
			1 & f_2^{ \pm}(z) \\
			0 & 1
		\end{array}\right).
	\end{gather}
	Relations among $k_1^{ \pm}(z), k_2^{ \pm}(z), e_1^{ \pm}(z)$, $f_1^{ \pm}(z)$  and among $k_2^{ \pm}(z), k_3^{ \pm}(z), e_2^{ \pm}(z), f_2^{ \pm}(z)$
 have already been computed above, we only need to get relations between $k_1^{ \pm}(z)$, $e_1^{ \pm}(z)$, $f_1^{ \pm}(z)$ and $k_3^{ \pm}(z), e_2^{ \pm}(z)$, $f_2^{ \pm}(z)$. For this, we write
 $L^{ \pm}(z)$ and $L^{ \pm}(z)^{-1}$ in the following forms
	\begin{gather}
		L^{ \pm}(z)=\left(\begin{array}{ccc}
			k_1^{ \pm}(z) & k_1^{ \pm}(z) f_1^{ \pm}(z) & * \\
			e_1^{ \pm}(z) k_1^{ \pm}(z) & * & * \\
			e_{3,1}^{ \pm}(z) k_1^{ \pm}(z) & * & *
		\end{array}\right),
	\end{gather}
	\begin{equation}
		\begin{split}
			&L^{ \pm}(w)^{-1}=\\
			&\left(\begin{array}{ccc}
				* & * & * \\
				* & * & -f_2^{ \pm}(w) k_3^{ \pm}(w)^{-1} \\
				k_3^{ \pm}(w)^{-1}\left[e_2^{ \pm}(w) e_1^{ \pm}(w)-e_{3,1}^{ \pm}(w)\right] & -k_3^{ \pm}(w)^{-1} e_2^{ \pm}(w) & k_3^{ \pm}(w)^{-1}
			\end{array}\right).
		\end{split}
	\end{equation}
	where $*$ represent some elements in the $U(R)$. 
	For convenience, we rewrite (\ref{RLL2.5})-(\ref{RLL2.7}) explicitly as
	\begin{gather}
		\label{RLL rel1 N=3}
		\left(L_1^{ \pm}(w)^{-1}\right)_{j_1}^{i_1} R_{21}\left(\frac{z}{w}\right)_{k_1 j_2}^{j_1 i_2} L_2^{ \pm}(z)_{k_2}^{j_2}=L_2^{ \pm}(z)_{j_2}^{i_2} R_{21}\left(\frac{z}{w}\right)_{j_1 k_2}^{i_1 j_2}\left(L_1^{ \pm}(w)^{-1}\right)_{k_1}^{j_1},
	\end{gather}
	\begin{gather}
		\label{RLL rel2 N=3}
		\left(L_1^{\mp}(w)^{-1}\right)_{j_1}^{i_1} R_{21}\left(\frac{z_{ \pm}}{w_{\mp}}\right)_{k_1 j_2}^{j_1 i_2} L_2^{ \pm}(z)_{k_2}^{j_2}=L_2^{ \pm}(z)_{j_2}^{i_2} R_{21}\left(\frac{z_{\mp}}{w_{ \pm}}\right)_{j_1 k_2}^{i_1 j_2}\left(L_1^{\mp}(w)^{-1}\right)_{k_1}^{j_1},
	\end{gather}
where $i_1, i_2, k_1, k_2$ are free indices, summations over $j_1, j_2$ are assumed. By taking special values of $i_1, k_1, i_2, k_2$, we obtain the following relations:
	\begin{gather}
		\label{k1k3 rel1}
		k_1^{ \pm}(z) k_3^{ \pm}(w)=k_3^{ \pm}(w) k_1^{ \pm}(z),\\
		\label{k1k3 rel2}
		\frac{z_{ \pm}-w_{\mp}}{z_{ \pm} q-w_{\mp} p^{-1}} k_3^{\mp}(w)^{-1} k_1^{ \pm}(z)=k_1^{ \pm}(z) k_3^{\mp}(w)^{-1} \frac{z_{\mp}-w_{ \pm}}{z_{\mp} q-w_{ \pm} p^{-1}},\\
		\label{k3e1 rel1}
e_1^{\epsilon}(z) k_3^{\epsilon'}(w)=k_3^{\epsilon'}(w) e_1^{ \epsilon}(z),\\
k_3^{\epsilon}(w) f_1^{\epsilon'}(z)=f_1^{\epsilon'}(z) k_3^{ \epsilon}(w),\\
k_1^{\epsilon}(w) f_2^{\epsilon'}(z)=f_2^{\epsilon'}(z) k_1^{ \epsilon}(w),\\
e_2^{ \epsilon}(w) k_1^{ \epsilon'}(z)=k_1^{ \epsilon'}(z) e_2^{ \epsilon}(w),\\
e_2^{ \epsilon}(w) f_1^{ \epsilon'}(z)=f_1^{ \epsilon'}(z) e_2^{ \epsilon}(w),\\
f_2^{ \epsilon}(w) e_1^{ \epsilon'}(z)=e_1^{ \epsilon'}(z) f_2^{ \epsilon}(w),
	\end{gather}
where $\epsilon, \epsilon'\in\{+, -\}$.
	One can prove that these relations are true for all situations in the $N=3$ case.
	Then, let $i_1=3, k_1=1, i_2=2, k_2=1$, we have
	\begin{align}\notag
		& -\frac{(z-w)qp^{-1}}{z q-w p^{-1}} e_1^{ \pm}(z) k_1^{ \pm}(z) k_3^{ \pm}(w)^{-1} e_2^{ \pm}(w) \\  \label{X1X2 pre rel1}
			& =\frac{w\left(q-p^{-1}\right)}{z q-w p^{-1}} k_3^{ \pm}(w)^{-1}\left[-e_{3,1}^{ \pm}(w)+e_2^{ \pm}(w) e_1^{ \pm}(w)\right] k_1^{ \pm}(z) \\ \notag
			& \quad-d\left(\frac{z}{w}\right) k_3^{ \pm}(w)^{-1} e_2^{ \pm}(w) e_1^{ \pm}(z) k_1^{ \pm}(z)+\frac{z\left(q-p^{-1}\right)}{z q-w p^{-1}} k_3^{ \pm}(w)^{-1} e_{3,1}^{ \pm}(z) k_1^{ \pm}(z),
	\end{align}
	\begin{align}\notag
		& -\frac{(z_{\mp}-w_{ \pm})qp^{-1}}{z_{\mp} q-w_{ \pm} p^{-1}} e_1^{ \pm}(z) k_1^{ \pm}(z) k_3^{\mp}(w)^{-1} e_2^{\mp}(w) \\ \label{X1X2 pre rel2}
			& =\frac{w_{\mp}\left(q-p^{-1}\right)}{z_{ \pm} q-w_{\mp} p^{-1}} k_3^{\mp}(w)^{-1}\left[-e_{3,1}^{\mp}(w)+e_2^{\mp}(w) e_1^{\mp}(w)\right] k_1^{ \pm}(z) \\ \notag
			& \quad-d\left(\frac{z_{ \pm}}{w_{\mp}}\right) k_3^{\mp}(w)^{-1} e_2^{\mp}(w) e_1^{ \pm}(z) k_1^{ \pm}(z)+\frac{z_{ \pm}\left(q-p^{-1}\right)}{z_{ \pm} q-w_{\mp} p^{-1}} k_3^{\mp}(w)^{-1} e_{3,1}^{ \pm}(z) k_1^{ \pm}(z).
		\end{align}
	Here $d(z / w)=1$  for the case $m=2$ or $m=3$, $d(z / w)=\left(w q-z p^{-1}\right) /\left(z q-w p^{-1}\right)$ for the case $m=1$ or $m=0$.
	Multiplying on the left by $k_3^{ \pm}(w)$ and on the right by $k_1^{ \pm}(z)^{-1}$ on both sides of equation (\ref{X1X2 pre rel1}), we get that
		\begin{align}
			& -\frac{(z{-}w) q p^{-1}}{z q{-}w p^{-1}} k_3^{ \pm}(w) e_1^{ \pm}(z) k_1^{ \pm}(z) k_3^{ \pm}(w)^{-1} e_2^{ \pm}(w) k_1^{ \pm}(z)^{-1} \\ \notag
			= & \frac{w\left(q{-}p^{-1}\right)}{z q{-}w p^{-1}}\left[-e_{3,1}^{ \pm}(w)+e_2^{ \pm}(w) e_1^{ \pm}(w)\right]-d\left(\frac{z}{w}\right) e_2^{ \pm}(w) e_1^{ \pm}(z)+\frac{z\left(q{-}p^{-1}\right)}{z q{-}w p^{-1}} e_{3,1}^{ \pm}(z),
		\end{align}
	then with the help of (\ref{k1k3 rel1}) and (\ref{k3e1 rel1}) and after some simplifications, we can get
	$$
	\begin{aligned}
		\left(q{-}p^{-1}\right)\Bigl[z e_{3,1}^{ \pm}(z)&-w e_{3,1}^{ \pm}(w)\Bigr]=  \left(z q-w p^{-1}\right) d(z / w) e_2^{ \pm}(w) e_1^{ \pm}(z)\\
		& -w\left(q-p^{-1}\right) e_2^{ \pm}(w) e_1^{ \pm}(w) -(z-w)qp^{-1} e_1^{ \pm}(z) e_2^{ \pm}(w).
	\end{aligned}
	$$
	Replacing the spectral parameters from $z, w$ to $z_{-}, w_{-}$ and $z_{+}, w_{+}$ respectively, we then have the following
	relations: 
	$$
	\begin{aligned}
		\left(q{-}p^{-1}\right)\Bigl[z e_{3,1}^{\pm}\left(z_{\mp}\right)&-w e_{3,1}^{\pm}\left(w_{\mp}\right)\Bigr]= \left(z q-w p^{-1}\right) d(z / w) e_2^{\pm}\left(w_{\mp}\right) e_1^{\pm}\left(z_{\mp}\right)\\
		& -w\left(q{-}p^{-1}\right) e_2^{\pm}\left(w_{\mp}\right)  e_1^{+}\left(w_{\mp}\right)-(z-w)qp^{-1} e_1^{\pm}\left(z_{mp}\right) e_2^{+}\left(w_{-}\right).
	\end{aligned}
	$$
	Similarly, multiplying from both sides of (\ref{X1X2 pre rel2}) on the left by $k_1^{\mp}(w)$ and on the right by $k_1^{ \pm}(z)^{-1}$,  with the help of (\ref{k1k3 rel2}) and (\ref{k3e1 rel1}), we have that
	$$
	\begin{aligned}
		-\frac{(z_{ \pm}-w_{\mp})qp^{-1}}{z_{ \pm} q-w_{\mp} p^{-1}} e_1^{ \pm}(z) e_2^{\mp}(w)&=  \frac{w_{\mp}\left(q-p^{-1}\right)}{z_{ \pm} q-w_{\mp} p^{-1}}\left[-e_{3,1}^{\mp}(w)+e_2^{\mp}(w) e_1^{\mp}(w)\right] \\
		&\quad -d\left(\frac{z_{ \pm}}{w_{\mp}}\right) e_2^{\mp}(w) e_1^{ \pm}(z)+\frac{z_{ \pm}\left(q-p^{-1}\right)}{z_{ \pm} q-w_{\mp} p^{-1}} e_{3,1}^{ \pm}(z).
	\end{aligned}
	$$
	Replacing $z, w$ with $z_{+}, w_{-}$ (resp. $z_{-}, w_{+}$), we have that
	$$
	\begin{aligned}
		\left(q{-}p^{-1}\right)\Bigl[z e_{3,1}^{\pm}\left(z_{\mp}\right)&-w e_{3,1}^{\mp}\left(w_{\pm}\right)\Bigr]=  \left.\left(z q-w p^{-1}\right) d(z / w) e_2^{\mp}\left(w_{\mp}\right) e_1^{\pm}\left(z_{mp}\right)\right) \\
		& -w\left(q-p^{-1}\right) e_2^{\mp}\left(w_{\pm}\right) e_1^{\mp}\left(w_{\pm}\right)-(z-w)qp^{-1} e_1^{\pm}\left(z_{\mp}\right) e_2^{\mp}\left(w_{\pm}\right),
	\end{aligned}
	$$
	Canceling the terms $e_{3,1}^{ \pm}\left(z_{\mp}\right)$, we then have
	$$
	\begin{gathered}
		(z q{-}w p^{-1}) d(z / w)\left[e_2^{-}(w_{+}) e_1^{+}(z_{-}){+}e_2^{+}(w_{-}) e_1^{-}(z_{+}) e_2^{+}(w_{-}) e_1^{+}(z_{-}){-}e_2^{-}(w_{+}) e_1^{-}(z_{+})\right] \\
		=(z{-}w)qp^{-1}\left[e_1^{+}(z_{-}) e_2^{-}(w_{+}){+}e_1^{-}(z_{+}) e_2^{+}(w_{-}){-}e_1^{+}(z_{-}) e_2^{+}(w_{-}){-}e_1^{-}(z_{+}) e_2^{-}(w_{+})\right].
	\end{gathered}
	$$
	Thus
	\begin{gather}
		(z-w)qp^{-1} X_1^{+}(z) X_2^{+}(w)=d(z / w)\left(z q-w p^{-1}\right) X_2^{+}(w) X_1^{+}(z).
	\end{gather}
	Similarly, we can show that $X_1^{-}(z)$ and $X_2^{-}(w)$ satisfy the following commutation relations.
	\begin{gather}
		d(z / w)\left(z q-w p^{-1}\right) X_1^{-}(z) X_2^{-}(w)=(z-w) q p^{-1} X_2^{-}(w) X_1^{-}(z).
	\end{gather}
	Furthermore, we have
	\begin{gather}
		\label{X1X2 rel1}
		(z-w)qp^{-1} X_1^{+}(z) X_2^{+}(w)=\left(z q-w p^{-1}\right) X_2^{+}(w) X_1^{+}(z), \quad m=2, 3\\[2mm]
		(w-z)qp^{-1} X_1^{+}(z) X_2^{+}(w)=\left(w q-z p^{-1}\right) X_2^{+}(w) X_1^{+}(z), \quad m=1, 0,\\
		\left(z q-w p^{-1}\right) X_1^{-}(z) X_2^{-}(w)=(z-w)qp^{-1} X_2^{-}(w) X_1^{-}(z), \quad m=2, 3,\\
		\label{X1X2 rel4}
		\left(w q-z p^{-1}\right) X_1^{-}(z) X_2^{-}(w)=(w-z)qp^{-1} X_2^{-}(w) X_1^{-}(z), \quad m=1, 0 .
	\end{gather}
\subsection{Additional Serre relations} Since there are twice as many group-like generators, it requires more effort to control additional terms in the quantum $R$-matrix. As a result there are more Serre relations from the $RLL$ relations.
	Using (\ref{X1X1 rel1 type2}), 
(\ref{X1X1 rel1 type1})-(\ref{X1X1 rel2 type1}), (\ref{X1X1 rel1 type3})-(\ref{X1X1 rel2 type3}), (\ref{X1X2 rel1})-(\ref{X1X2 rel4}), we obtain
the cubic relations in the $N=3$ cases.  There are four types of $R$-matrices when $N=3$, the
	cubic relations are listed respectively as follows.
	
	Case $m=3$,
	\begin{equation}
		\begin{aligned}
			\label{ser1 m=3}
			& \left\{(p^{-1}q)^{}X_1^{+}\left(z_1\right) X_1^{ +}\left(z_2\right) X_2^{ +}(w)-\left(q+p^{-1}\right) X_1^{ +}\left(z_1\right) X_2^{ +}(w) X_1^{ +}\left(z_2\right)\right. \\
			& \left.\qquad\qquad\quad+\,X_2^{ +}(w) X_1^{ +}\left(z_1\right) X_1^{ +}\left(z_2\right)\right\}+\left\{z_1 \leftrightarrow z_2\right\}=0,
		\end{aligned}
	\end{equation}
	\begin{equation}
		\begin{aligned}
			& \left\{X_1^{-}\left(z_1\right) X_1^{ -}\left(z_2\right) X_2^{ -}(w)-\left(q+p^{-1}\right) X_1^{ -}\left(z_1\right) X_2^{ -}(w) X_1^{ -}\left(z_2\right)\right. \\
			& \left.\qquad+\,(p^{-1}q)^{}X_2^{ -}(w) X_1^{ -}\left(z_1\right) X_1^{ -}\left(z_2\right)\right\}+\left\{z_1 \leftrightarrow z_2\right\}=0,
		\end{aligned}
	\end{equation}
	\begin{equation}
		\begin{aligned}
			& \left\{X_2^{ +}\left(z_1\right) X_2^{ +}\left(z_2\right) X_1^{ +}(w)-\left(q+p^{-1}\right) X_2^{ +}\left(z_1\right) X_1^{+}(w) X_2^{ +}\left(z_2\right)\right. \\
			& \qquad\left.+\,(p^{-1}q)^{}X_1^{ +}(w) X_2^{ +}\left(z_1\right) X_2^{ +}\left(z_2\right)\right\}+\left\{z_1 \leftrightarrow z_2\right\}=0 ,
		\end{aligned}
	\end{equation}
	\begin{equation}
		\begin{aligned}
			& \left\{(p^{-1}q)^{}X_2^{ -}\left(z_1\right) X_2^{ -}\left(z_2\right) X_1^{ -}(w)-\left(q+p^{-1}\right) X_2^{ -}\left(z_1\right) X_1^{ -}(w) X_2^{ -}\left(z_2\right)\right. \\
			& \left.\qquad\qquad\quad+\,X_1^{ -}(w) X_2^{ -}\left(z_1\right) X_2^{ -}\left(z_2\right)\right\}+\left\{z_1 \leftrightarrow z_2\right\}=0 .
		\end{aligned}
	\end{equation}
	
	Case $m=2$,
	\begin{equation}
		\begin{aligned}
			& \left\{(p^{-1}q)X_1^{+}\left(z_1\right) X_1^{+}\left(z_2\right) X_2^{+}(w)-\left(q+p^{-1}\right) X_1^{+}\left(z_1\right) X_2^{+}(w) X_1^{+}\left(z_2\right)\right. \\
			& \left.\qquad\qquad\quad+\,X_2^{+}(w) X_1^{+}\left(z_1\right) X_1^{+}\left(z_2\right)\right\}+\left\{z_1 \leftrightarrow z_2\right\}=0,
		\end{aligned}
	\end{equation}
	
\begin{align}\notag
			& \left\{( z _ { 1 } p ^ { - 1 } - z _ { 2 } q ) \left[X_2^{+}\left(z_1\right) X_2^{+}\left(z_2\right) X_1^{+}(w)-\left(q+p^{-1}\right) X_2^{+}\left(z_1\right) X_1^{+}(w) X_2^{+}\left(z_2\right)\right.\right. \\ \label{ser2 m=2}
			& \left.\left.\quad\qquad\qquad+\,(p^{-1}q)X_1^{+}(w) X_2^{+}\left(z_1\right) X_2^{+}\left(z_2\right)\right]\right\}+\left\{z_1 \leftrightarrow z_2\right\}=0,
\end{align}
	\begin{equation}
		\begin{aligned}
			& \left\{X_1^{-}\left(z_1\right) X_1^{-}\left(z_2\right) X_2^{-}(w)-\left(q+p^{-1}\right) X_1^{-}\left(z_1\right) X_2^{-}(w) X_1^{-}\left(z_2\right)\right. \\
			& \left.\qquad+\,(p^{-1}q)^{}X_2^{-}(w) X_1^{-}\left(z_1\right) X_1^{-}\left(z_2\right)\right\}+\left\{z_1 \leftrightarrow z_2\right\}=0,
		\end{aligned}
	\end{equation}
\begin{align}\notag
			& \left\{( z _ { 1 } q - z _ { 2 } p ^ { - 1 } ) \left[(p^{-1}q)^{}X_2^{-}\left(z_1\right) X_2^{-}\left(z_2\right) X_1^{-}(w)-\left(q{+}p^{-1}\right) X_2^{-}\left(z_1\right) X_1^{-}(w) X_2^{-}\left(z_2\right)\right.\right. \\
			& \left.\left.\qquad\qquad\qquad\qquad+\,X_1^{-}(w) X_2^{-}\left(z_1\right) X_2^{-}\left(z_2\right)\right]\right\}+\left\{z_1 \leftrightarrow z_2\right\}=0 .
		\end{align}
Case $m=1$,
\begin{align}\notag
			& \left\{( z _ { 2 } p ^ { - 1 } - z _ { 1 } q ) \left[(p^{-1}q)X_1^{+}\left(z_1\right) X_1^{+}\left(z_2\right) X_2^{+}(w)-\left(q{+}p^{-1}\right) X_1^{+}\left(z_1\right) X_2^{+}(w) X_1^{+}\left(z_2\right)\right.\right. \\
			& \left.\left.\qquad\qquad\qquad\qquad+\,X_2^{+}(w) X_1^{+}\left(z_1\right) X_1^{+}\left(z_2\right)\right]\right\}+\left\{z_1 \leftrightarrow z_2\right\}=0, \\ \notag
			& \left\{X_2^{+}\left(z_1\right) X_2^{+}\left(z_2\right) X_1^{+}(w)-\left(q+p^{-1}\right) X_2^{+}\left(z_1\right) X_1^{+}(w) X_2^{+}\left(z_2\right)\right. \\
			& \left.\qquad+\,(p^{-1}q)X_1^{+}(w) X_2^{+}\left(z_1\right) X_2^{+}\left(z_2\right)\right\}+\left\{z_1 \leftrightarrow z_2\right\}=0,\\ \notag
			& \left\{( z _ { 2 } q - z _ { 1 } p ^ { - 1 }  ) \left[X_1^{-}\left(z_1\right) X_1^{-}\left(z_2\right) X_2^{-}(w)-\left(q+p^{-1}\right) X_1^{-}\left(z_1\right) X_2^{-}(w) X_1^{-}\left(z_2\right)\right.\right. \\
			& \left.\left.\qquad\qquad+\,(p^{-1}q)X_2^{+}(w) X_1^{-}\left(z_1\right) X_1^{-}\left(z_2\right)\right]\right\}+\left\{z_1 \leftrightarrow z_2\right\}=0,
		\end{align}
\begin{equation}
		\begin{aligned}
			& \left\{(p^{-1}q)X_2^{-}\left(z_1\right) X_2^{-}\left(z_2\right) X_1^{-}(w)-\left(q+p^{-1}\right) X_2^{-}\left(z_1\right) X_1^{-}(w) X_2^{-}\left(z_2\right)\right. \\
			& \left.\qquad\qquad+\,X_1^{-}(w) X_2^{-}\left(z_1\right) X_2^{-}\left(z_2\right)\right\}+\left\{z_1 \leftrightarrow z_2\right\}=0 .
		\end{aligned}
	\end{equation}
	
	Case $m=0$,
	\begin{equation}
		\begin{aligned}
			& \left\{(p^{-1}q)X_1^{ +}\left(z_1\right) X_1^{+}\left(z_2\right) X_2^{ +}(w)-\left(q+p^{-1}\right) X_1^{ +}\left(z_1\right) X_2^{ +}(w) X_1^{ +}\left(z_2\right)\right. \\
			& \left.\qquad\qquad+\,X_2^{ +}(w) X_1^{ +}\left(z_1\right) X_1^{ +}\left(z_2\right)\right\}+\left\{z_1 \leftrightarrow z_2\right\}=0,
		\end{aligned}
	\end{equation}
	\begin{equation}
		\begin{aligned}
			& \left\{X_1^{ -}\left(z_1\right) X_1^{-}\left(z_2\right) X_2^{ -}(w)-\left(q+p^{-1}\right) X_1^{ -}\left(z_1\right) X_2^{ -}(w) X_1^{ -}\left(z_2\right)\right. \\
			& \left.\qquad+\,(p^{-1}q)X_2^{ -}(w) X_1^{ -}\left(z_1\right) X_1^{ -}\left(z_2\right)\right\}+\left\{z_1 \leftrightarrow z_2\right\}=0,
		\end{aligned}
	\end{equation}
	\begin{equation}
		\begin{aligned}
			& \left\{X_2^{ +}\left(z_1\right) X_2^{ +}\left(z_2\right) X_1^{ +}(w)-\left(q+p^{-1}\right) X_2^{ +}\left(z_1\right) X_1^{ +}(w) X_2^{ +}\left(z_2\right)\right. \\
			&\qquad \left.+\,(p^{-1}q)X_1^{ +}(w) X_2^{ +}\left(z_1\right) X_2^{ +}\left(z_2\right)\right\}+\left\{z_1 \leftrightarrow z_2\right\}=0 .
		\end{aligned}
	\end{equation}
	\begin{equation}
		\begin{aligned}
			& \left\{(p^{-1}q)X_2^{ -}\left(z_1\right) X_2^{ -}\left(z_2\right) X_1^{ -}(w)-\left(q+p^{-1}\right) X_2^{ -}\left(z_1\right) X_1^{ -}(w) X_2^{ -}\left(z_2\right)\right. \\
			&\qquad\qquad\left.+\,X_1^{ -}(w) X_2^{ -}\left(z_1\right) X_2^{ -}\left(z_2\right)\right\}+\left\{z_1 \leftrightarrow z_2\right\}=0 .
		\end{aligned}
	\end{equation}
	As all verifications are similar for the Serre relations, we just use (\ref{ser1 m=3}) and (\ref{ser2 m=2}) as examples. It follows from
 (\ref{X1X1 rel1 type2}) and (\ref{X1X2 rel1}) that
	\begin{gather}
		X_1^{+}\left(z_1\right) X_2^{+}(w) X_1^{+}\left(z_2\right)=\frac{\left(z_2-w\right) q p^{-1}}{z_2 q-w p^{-1}} X_1^{+}\left(z_1\right) X_1^{+}\left(z_2\right) X_2^{+}(w),\\
		X_2^{+}(w) X_1^{+}\left(z_1\right) X_1^{+}\left(z_2\right)=\frac{\left(z_1-w\right) q p^{-1}}{z_1 q-w p^{-1}} \frac{\left(z_2-w\right) q p^{-1}}{z_2 q-w p^{-1}} X_1^{+}\left(z_1\right) X_1^{+}\left(z_2\right) X_2^{+}(w),\\
		X_1^{+}\left(z_2\right) X_1^{+}\left(z_1\right) X_2^{+}(w)=\frac{z_2 q^{-1}-z_1 p}{z_2 p-z_1 q^{-1}} X_1^{+}\left(z_1\right) X_1^{+}\left(z_2\right) X_2^{+}(w),\\
		X_1^{+}(z_2) X_2^{+}(w) X_1^{+}(z_1)=\frac{(z_1-w) q p^{-1}}{z_1 q-w p^{-1}} \frac{z_2 q^{-1}-z_1 p}{z_2 p-z_1 q^{-1}} X_1^{+}(z_1) X_1^{+}(z_2) X_2^{+}(w),\\
		X_2^{+}(w) X_1^{+}(z_2) X_1^{+}(z_1){=}\frac{(z_2{-}w) q p^{-1}}{z_2 q{-}w p^{-1}} \frac{(z_1{-}w) q p^{-1}}{z_1 q{-}w p^{-1}} \frac{z_2 q^{-1}{-}z_1 p}{z_2 p{-}z_1 q^{-1}} X_1^{+}(z_1) X_1^{+}(z_2) X_2^{+}(w).
	\end{gather}
	Note that the coefficient of $X_1^{+}\left(z_1\right) X_1^{+}\left(z_2\right) X_2^{+}(w)$:
	\begin{equation*}
		\begin{aligned}
			p^{-1} q-\left(q-p^{-1}\right) \frac{\left(z_2-w\right) q p^{-1}}{z_2 q-w p^{-1}}+\frac{\left(z_1-w\right) q p^{-1}}{z_1 q-w p^{-1}} \frac{\left(z_2-w\right) q p^{-1}}{z_2 q-w p^{-1}}+p^{-1} q \frac{z_2 q^{-1}-z_1 p}{z_2 p-z_1 q^{-1}}\\
			-\left(q-p^{-1}\right) \frac{\left(z_1-w\right) q p^{-1}}{z_1 q-w p^{-1}} \frac{z_2 q^{-1}-z_1 p}{z_2 p-z_1 q^{-1}}+\frac{\left(z_2-w\right) q p^{-1}}{z_2 q-w p^{-1}} \frac{\left(z_1-w\right) q p^{-1}}{z_1 q-w p^{-1}} \frac{z_2 q^{-1}-z_1 p}{z_2 p-z_1 q^{-1}},
		\end{aligned}
	\end{equation*}
	which is $0$. Therefore (\ref{ser1 m=3}) holds true. Next, we consider (\ref{ser2 m=2}). By (\ref{X1X1 rel1 type2}) and (\ref{X1X2 rel1}) we have that
	\begin{gather}
		X_2^{+}\left(z_1\right) X_1^{+}(w) X_2^{+}\left(z_2\right)=\frac{w q-z_2 p^{-1}}{(w-z_2)qp^{-1}} X_2^{+}\left(z_1\right) X_2^{+}\left(z_2\right) X_1^{+}(w),\\
		X_1^{+}(w) X_2^{+}\left(z_1\right) X_2^{+}\left(z_2\right)=\frac{w q-z_1 p^{-1}}{(w-z_1)qp^{-1}} \frac{w q-z_2 p^{-1}}{(w-z_2)qp^{-1}} X_2^{+}\left(z_1\right) X_2^{+}\left(z_2\right) X_1^{+}(w),\\
		X_2^{+}\left(z_2\right) X_2^{+}\left(z_1\right) X_1^{+}(w)=-X_2^{+}\left(z_1\right) X_2^{+}\left(z_2\right) X_1^{+}(w),\\
		X_2^{+}\left(z_2\right) X_1^{+}(w) X_2^{+}\left(z_1\right)=-\frac{w q-z_1 p^{-1}}{(w-z_1)qp^{-1}} X_2^{+}\left(z_1\right) X_2^{+}\left(z_2\right) X_1^{+}(w),\\
		X_1^{+}(w) X_2^{+}\left(z_2\right) X_2^{+}\left(z_1\right)=-\frac{w q-z_1 p^{-1}}{(w-z_1)qp^{-1}} \frac{w q-z_2 p^{-1}}{(w-z_2)qp^{-1}} X_2^{+}\left(z_1\right) X_2^{+}\left(z_2\right) X_1^{+}(w) .
	\end{gather}
	So the coefficient of the term $X_2^{+}\left(z_1\right) X_2^{+}\left(z_2\right) X_1^{+}(w)$ becomes
	\begin{equation*}
		\begin{aligned}
			\left(z_1 p^{-1}-z_2 q\right)\left\{1-\left(q{-}p^{-1}\right) \frac{w q{-}z_2 p^{-1}}{\left(w{-}z_2\right) q p^{-1}}+p^{-1} q \frac{w q{-}z_1 p^{-1}}{\left(w{-}z_1\right) q p^{-1}} \frac{w q{-}z_2 p^{-1}}{\left(w{-}z_2\right) q p^{-1}}\right\}\\
			+\left(z_2 p^{-1}-z_1 q\right)\left\{-1+\left(q-p^{-1}\right) \frac{w q{-}z_1 p^{-1}}{\left(w{-}z_1\right) q p^{-1}}-p^{-1} q \frac{w q{-}z_1 p^{-1}}{\left(w{-}z_1\right) q p^{-1}} \frac{w q{-}z_2 p^{-1}}{\left(w{-}z_2\right) q p^{-1}}\right\}
		\end{aligned}
	\end{equation*}
	equal to $0$, (\ref{ser2 m=2}) holds true. Similarly we can verify other Serre relations.
	
Now we move to the general $N$ case. Just like the $N=3$ case, we divide the case of $N+1$ it into two $N$ cases,
use induction to assume that all relations for the $N$ cases are known, it then suffices to check
the relations between $k_1^{ \pm}(z), e_1^{ \pm}(z), f_1^{ \pm}(z)$ and $k_N^{ \pm}(z)$, $e_{N-1}^{ \pm}(z), f_{N-1}^{ \pm}(z)$.  For this purpose, we write $L^{ \pm}(z)$ and $L^{ \pm}(w)^{-1}$ respectively as follows.
	\begin{gather}
		L^{ \pm}(z)=\left(\begin{array}{ccc}
			k_1^{ \pm}(z) & k_1^{ \pm}(z) f_1^{ \pm}(z) & \cdots \\
			e_1^{ \pm}(z) k_1^{ \pm}(z) & \cdots & \vdots \\
			\vdots & \cdots &
		\end{array}\right),
	\end{gather}
	and
	\begin{gather}
		L^{ \pm}(w)^{-1}=\left(\begin{array}{ccc}
			\cdots & \cdots & \vdots \\
			\vdots & \cdots & -f_{N-1}^{ \pm}(w) k_{N-1}^{ \pm}(w)^{-1} \\
			\cdots & -k_{N-1}^{ \pm}(w)^{-1} e_1^{ \pm}(w) & k_N^{ \pm}(w)^{-1}
		\end{array}\right).
	\end{gather}
	Using relations (\ref{RLL rel1 N=3}) and (\ref{RLL rel2 N=3}), we have
	\begin{gather}
		k_1^{ \pm}(z) k_N^{ \pm}(w)=k_N^{ \pm}(w) k_1^{ \pm}(z),\\
		\frac{z_{ \pm}-w_{\mp}}{z_{ \pm} q-w_{\mp} p^{-1}} k_N^{\mp}(w)^{-1} k_1^{ \pm}(z)=\frac{z_{\mp}-w_{ \pm}}{z_{\mp} q-w_{ \pm} p^{-1}} k_1^{ \pm}(z) k_N^{\mp}(w)^{-1},\\
k_1^{\epsilon}(z) e_{N-1}^{\epsilon'}(w)=e_{N-1}^{\epsilon'}(w) k_1^{\epsilon}(z),\\
k_1^{ \pm}(z) f_{N-1}^{\epsilon}(w)=f_{N-1}^{\epsilon}(w) k_1^{ \pm}(z),\\
k_N^{ \pm}(z) e_1^{\epsilon}(w)=e_1^{\epsilon}(w) k_N^{ \pm}(z),\\
k_N^{ \pm}(z) f_1^{\epsilon}(w)=f_1^{\epsilon}(w) k_N^{ \pm}(z),\\
f_1^{ \pm}(z) e_{N-1}^{\epsilon}(w)=e_{N-1}^{\epsilon}(w) f_1^{ \pm}(z),\\
e_1^{ \pm}(z) f_{N-1}^{\epsilon}(w)=f_{N-1}^{\epsilon}(w) e_1^{ \pm}(z),\\
f_1^{ \pm}(z) f_{N-1}^{\epsilon}(w)=f_{N-1}^{\epsilon}(w) f_1^{ \pm}(z),\\
e_1^{ \pm}(z) e_{N-1}^{\epsilon}(w)=e_{N-1}^{\epsilon}(w) e_1^{ \pm}(z),
	\end{gather}
	Note that all these relations are true for $R$-matrices in all situations. 
We have thus found all Drinfeld-type relations for $U_{p, q}(\widehat{\mathfrak{gl}}(m|n))$ and summarize them as follows.
	
	\begin{theorem}
		$U_{p, q}(\widehat{\mathfrak{gl}}(m|n))$ is an associative superalgebra with unit $1$ and Drinfeld current generators $X_i^{ \pm}(z), k_j^{ \pm}(z), i=1,2, \cdots, m+n-1, j=1,2, \cdots, m+n$, a central element $c$ and two nonzero complex parameters $p, q$ with invertible $k_i^{ \pm}(z)$.  The grading of the generators are: $\left[X_m^{ \pm}(z)\right]=1$ and zero otherwise. The defining relations are given by
		\begin{align}
			k_i^{ \pm}(z) k_j^{ \pm}(w)&=k_j^{ \pm}(w) k_i^{ \pm}(z),\\
			k_i^{+}(z) k_i^{-}(w)&=k_i^{-}(w) k_i^{+}(z), \quad i \leq m,\\
			\label{kiki}
			\frac{w_{-} p-z_{+} q^{-1}}{z_{+} p-w_{-} q^{-1}} k_i^{+}(z) k_i^{-}(w)&=\frac{w_{+} p-z_{-} q^{-1}}{z_{-} p-w_{+} q^{-1}} k_i^{-}(w) k_i^{+}(z), \quad m<i \leq m+n,
			\\
			\label{kikj}
			\frac{z_{ \pm}-w_{\mp}}{z_{ \pm} p-w_{\mp} q^{-1}} k_i^{\mp}(w)^{-1} k_j^{ \pm}(z)&=\frac{z_{\mp}-w_{ \pm}}{z_{\mp} p-w_{ \pm} q^{-1}} k_j^{ \pm}(z) k_i^{\mp}(w)^{-1}, \quad i>j,
		\end{align}
		\begin{align}\label{kjXi rel2}
			k_j^{ \pm}(z)^{-1} X_i^{\epsilon}(w) k_j^{ \pm}(z)&=X_i^{\epsilon}(w), \quad j-i \leq-1,
			\\\label{KjXi rel2}
			k_j^{ \pm}(z)^{-1} X_i^{\epsilon}(w) k_j^{ \pm}(z)&=X_i^{\epsilon}(w), \quad j-i \geq 2,\\
			k_i^{ \pm}(z)^{-1} X_i^{-}(w) k_i^{ \pm}(z)&=\frac{z_{\mp} p-w q^{-1}}{z_{\mp}-w} X_i^{-}(w), \quad i<m,\\
			k_i^{ \pm}(z)^{-1} X_i^{-}(w) k_i^{ \pm}(z)&=\frac{z_{\mp} q^{-1}-w p}{z_{\mp}-w} X_i^{-}(w), \quad m<i \leq m+n-1,\\
			k_{i+1}^{ \pm}(z)^{-1} X_i^{-}(w) k_{i+1}^{ \pm}(z)&=\frac{z_{\mp} q^{-1}-w p}{z_{\mp}-w} X_i^{-}(w), \quad i<m,\\
			k_{i+1}^{ \pm}(z)^{-1} X_i^{-}(w) k_{i+1}^{ \pm}(z)&=\frac{z_{\mp} p-w q^{-1}}{z_{\mp}-w} X_i^{-}(w), \quad m<i \leq m+n-1,
			\\
			k_i^{ \pm}(z) X_i^{+}(w) k_i^{ \pm}(z)^{-1}&=\frac{z_{ \pm} p-w q^{-1}}{z_{ \pm}-w} X_i^{+}(w), \quad i<m,\\
			\label{kiXi rel2}
			k_i^{ \pm}(z) X_i^{+}(w) k_i^{ \pm}(z)^{-1}&=\frac{z_{ \pm} q^{-1}-w p}{z_{ \pm}-w} X_i^{+}(w), \quad m<i \leq m+n-1,\\
			k_{i+1}^{ \pm}(z) X_i^{+}(w) k_{i+1}^{ \pm}(z)^{-1}&=\frac{z_{ \pm} q^{-1}-w p}{z_{ \pm}-w} X_i^{+}(w), \quad i<m,\\
			\label{ki+1Xi rel2}
			k_{i+1}^{ \pm}(z) X_i^{+}(w) k_{i+1}^{ \pm}(z)^{-1}&=\frac{z_{ \pm} p-w q^{-1}}{z_{ \pm}-w} X_i^{+}(w), \quad m<i \leq m+n-1,\\
			\label{kiXm rel1}
			k_i^{ \pm}(z)^{\epsilon} X_m^{\epsilon}(w) k_i^{ \pm}(z)^{-\epsilon}&=\frac{z_{\mp} p-w q^{-1}}{z_{\mp}-w} X_m^{\epsilon}(w), \quad i=m, m+1,\\
			\label{com rels1 Xi}
			\left(z q^{\mp 1}-w p^{ \pm 1}\right) X_i^{\mp}(z) X_i^{\mp}(w)&=\left(z p^{ \pm 1}-w q^{\mp 1}\right) X_i^{\mp}(w) X_i^{\mp}(z), \quad i<m,
			\\
			\left(w q^{\mp 1}-z p^{ \pm 1}\right) X_i^{\mp}(z) X_i^{\mp}(w)&=\left(w p^{ \pm 1}-z q^{\mp 1}\right) X_i^{\mp}(w) X_i^{\mp}(z), \quad m<i \leq m+n-1,\\
			\left\{X_m^{ \pm}(z), X_m^{ \pm}(w)\right\}&=0,\\
			(z-w)qp^{-1} X_i^{+}(z) X_{i+1}^{+}(w)&=\left(z q-w p^{-1}\right) X_{i+1}^{+}(w) X_i^{+}(z), \quad i<m,
		\end{align}
		\begin{align}
			(w-z)qp^{-1} X_i^{+}(z) X_{i+1}^{+}(w)&=\left(w q-z p^{-1}\right) X_{i+1}^{+}(w) X_i^{+}(z), \quad m \leq i \leq m+n-1,\\
			\left(z q-w p^{-1}\right) X_i^{-}(z) X_{i+1}^{-}(w)&=(z-w)qp^{-1} X_{i+1}^{-}(w) X_i^{-}(z), \quad i<m,\\
			\label{com rels7 Xi}
			\left(w q-z p^{-1}\right) X_i^{-}(z) X_{i+1}^{-}(w)&=(w-z)qp^{-1} X_{i+1}^{-}(w) X_i^{-}(z), \quad m \leq i \leq m+n-1,
			\\
			\left[X_i^{+}(z), X_j^{-}(w)\right]&= -\left(p-q^{-1}\right) \delta_{i j}\left(\delta\left(\frac{w}{z} q^c\right) k_{i+1}^{+}\left(w_{+}\right) k_i^{+}\left(w_{+}\right)^{-1}\right. \\
			& \quad\left.-\delta\left(\frac{w}{z} q^{-c}\right) k_{i+1}^{-}\left(z_{+}\right) k_i^{-}\left(z_{+}\right)^{-1}\right), \quad i, j \neq m, \notag
			\\
			\left\{X_m^{+}(z), X_m^{-}(w)\right\}&=  \left(p-q^{-1}\right)\left(\delta\left(\frac{w}{z} q^c\right) k_{m+1}^{+}\left(w_{+}\right) k_m^{+}\left(w_{+}\right)^{-1}\right. \\
			&\quad \left.-\delta\left(\frac{w}{z} q^{-c}\right) k_{m+1}^{-}\left(z_{+}\right) k_m^{-}\left(z_{+}\right)^{-1}\right),\notag
		\end{align}
		where $[X, Y] \equiv X Y-Y X$ stands for the commutator and $\{X, Y\} \equiv X Y+Y X$ the
		anti-commutator. The following are the Serre relations
		\begin{equation}
			\label{ser rel1 N}
			\begin{aligned}
				\bigl\{(p^{-1}q)&X_i^+\left(z_1\right)  X_i^{
					+}\left(z_2\right) X_{i+1}^{ +}(w)-\left(q{+}p^{-1}\right) X_i^{ +}\left(z_1\right) X_{i+1}^{ +}(w) X_i^{ +}\left(z_2\right) \\
				&	+\, X_{i+1}^{ +}(w) X_i^{ +}\left(z_1\right) X_i^{ +}\left(z_2\right)\bigr\}+\left\{z_1 \leftrightarrow z_2\right\}=0, \quad i \neq m,
			\end{aligned}
		\end{equation}
		\begin{equation}
			\begin{aligned}
				\bigl\{&X_i^{ -} (z_1)X_i^{
					-}(z_2) X_{i+1}^{ -}(w)-(q{+}p^{-1}) X_i^{ -}(z_1) X_{i+1}^{ -}(w) X_i^{ -}(z_2) \\
				&+\, (p^{-1}q)X_{i+1}^{ -}(w) X_i^{ -}(z_1) X_i^{ -}(z_2)\bigr\}+\left\{z_1 \leftrightarrow z_2\right\}=0, \quad i \neq m,
			\end{aligned}
		\end{equation}
		\begin{equation}
			\begin{aligned}
				& \left\{X_{i+1}^{ +}\left(z_1\right) X_{i+1}^{ +}\left(z_2\right) X_i^{ +}(w)-\left(q{+}p^{-1}\right) X_{i+1}^{ +}\left(z_1\right) X_i^{ +}(w) X_{i+1}^{ +}\left(z_2\right)\right. \\
				& \left.\quad+(p^{-1}q)X_i^{ +}(w) X_{i+1}^{ +}\left(z_1\right) X_{i+1}^{ +}\left(z_2\right)\right\}+\left\{z_1 \leftrightarrow z_2\right\}=0, \quad i \neq m-1,
			\end{aligned}
		\end{equation}
		\begin{equation}
			\begin{aligned}
				& \left\{(p^{-1}q)X_{i+1}^{-}\left(z_1\right) X_{i+1}^{-}\left(z_2\right) X_i^{ -}(w)-\left(q{+}p^{-1}\right) X_{i+1}^{ -}\left(z_1\right) X_i^{-}(w) X_{i+1}^{ -}\left(z_2\right)\right. \\
				& \left.\qquad\quad\,+X_i^{-}(w) X_{i+1}^{ -}\left(z_1\right) X_{i+1}^{ -}\left(z_2\right)\right\}+\left\{z_1 \leftrightarrow z_2\right\}=0, \quad i \neq m-1,
			\end{aligned}
		\end{equation}
		\begin{equation}
			\begin{aligned}
				&	\Bigr\{\left(z_1 p^{\mp 1}{-}z_2 q^{ \pm 1}\right)\Bigl[X_m^{ +}(z_1) X_m^{ +}(z_2) X_{m-1}^{ +}(w)-(q{+}p^{-1}) X_m^{ +}(z_1) X_{m-1}^{ +}(w) X_m^{ +}(z_2) \\
				&\qquad\qquad +\,(p^{-1}q)X_{m-1}^{ +}(w) X_m^{+}(z_1) X_m^{ +}(z_2)\Bigr]\Bigr\}+\{z_1 \leftrightarrow z_2\}=0,
			\end{aligned}
		\end{equation}
		\begin{equation}
			\begin{aligned}
				&\left\{\left(z_1 p^{\mp 1}{-}z_2 q^{ \pm 1}\right)\Bigl[(p^{-1}q)X_m^{-}(z_1) X_m^{ -}(z_2) X_{m-1}^{ -}(w)-(q{+}p^{-1}) X_m^{ -}(z_1) X_{m-1}^{ -}(w) X_m^{ -}(z_2)\right. \\
				&\qquad\qquad\qquad \qquad+\,X_{m-1}^{ -}(w) X_m^{ -}(z_1) X_m^{ -}(z_2)\Bigr]\Bigr\}+\{z_1 \leftrightarrow z_2\}=0,
			\end{aligned}
		\end{equation}
		\begin{equation}
			\begin{aligned}
				&\left\{\left(z_2 p^{\mp 1}{-}z_1 q^{ \pm 1}\right)\Bigl[(p^{-1}q)X_m^{ +}(z_1) X_m^{+}(z_2) X_{m+1}^{ +}(w)-(q{+}p^{-1}) X_m^{ +}(z_1) X_{m+1}^{ +}(w) X_m^{ +}(z_2)\right.\\
				& \qquad\qquad\qquad \qquad+\,X_{m+1}^{ +}(w) X_m^{ +}(z_1) X_m^{ +}(z_2)\Bigr]\Bigr\}+\{z_1 \leftrightarrow z_2\}=0,
			\end{aligned}
		\end{equation}
		\begin{equation}
			\label{ser rel8 N}
			\begin{aligned}
				&\left\{(z_2 p^{\mp 1}{-}z_1 q^{\pm 1})\Bigl[X_m^{ -}(z_1) X_m^{ -}(z_2) X_{m+1}^{ -}(w)-(q{+}p^{-1}) X_m^{ -}(z_1) X_{m+1}^{ -}(w) X_m^{ -}(z_2)\right. \\
				&\qquad\qquad\qquad+\,(p^{-1}q)X_{m+1}^{ -}(w) X_m^{ -}(z_1) X_m^{ -}(z_2)\Bigr]\Bigr\}+\left\{z_1 \leftrightarrow z_2\right\}=0.
			\end{aligned}
		\end{equation}
	\end{theorem}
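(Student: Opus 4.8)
The plan is to derive every relation in the list directly from the $RLL$ formalism, using the Gauss decomposition \eqref{e:Gauss} together with the definitions \eqref{e:Xpm1}--\eqref{e:Xpm2} of the Drinfeld currents, and to organize the computation as an induction on the rank $N=m+n$. First I would dispose of the base case $N=2$: inserting the explicit Gauss factorizations of $L^{\pm}(z)$ and $L^{\pm}(z)^{-1}$ into the componentwise $RLL$ relations \eqref{RLL2.1}--\eqref{RLL2.7} and reading off the entries against each of the three $R$-matrices (Types~1--3) produces all relations among $k_1^{\pm}, k_2^{\pm}, e_1^{\pm}, f_1^{\pm}$, hence among $k_1^{\pm}, k_2^{\pm}$ and $X_1^{\pm}$. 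The matrix $\theta$ must be inserted exactly as in \eqref{RLL1.1}--\eqref{RLL1.2} so that the graded tensor signs are absorbed cleanly; this is what makes the anti-commutator $\{X_1^{\pm}(z),X_1^{\pm}(w)\}=0$ and the anti-commutator form of the $X^+/X^-$ relation appear precisely in Type~2 ($m=1,n=1$), where the single node is odd, whereas Types~1 and~3 give ordinary commutators.

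For the inductive step I would assume that all relations are known through rank $N$ and split the rank-$(N{+}1)$ data into two overlapping rank-$N$ blocks sharing the middle Cartan generators, so that every relation internal to a block is supplied by the induction hypothesis. It then remains only to produce the long-distance relations between $k_1^{\pm}, e_1^{\pm}, f_1^{\pm}$ and $k_N^{\pm}, e_{N-1}^{\pm}, f_{N-1}^{\pm}$, which I would extract by specializing the free indices $i_1,k_1,i_2,k_2$ in \eqref{RLL rel1 N=3}--\eqref{RLL rel2 N=3}, exactly as in the $N=3$ computation. The only genuinely new nontrivial relations occur at $N=3$, where the two nodes are adjacent and one obtains the quadratic relations \eqref{X1X2 rel1}--\eqref{X1X2 rel4}; for $N\geq 4$ the extreme nodes are at distance $\geq 2$ and the new relations are all (super)commutativity of distant generators. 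A small but necessary consistency point is that the overlap data, in particular the shared $k^{\pm}$, receives the same relations from both blocks, which is guaranteed by the uniqueness of the Gauss decomposition.

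The Serre relations \eqref{ser rel1 N}--\eqref{ser rel8 N} are then obtained formally from the quadratic relations already in hand. Following the sample computations for \eqref{ser1 m=3} and \eqref{ser2 m=2}, I would use the single-node relations (such as \eqref{X1X1 rel1 type2}, \eqref{X1X1 rel1 type1}) together with the adjacent-node relations \eqref{X1X2 rel1}--\eqref{X1X2 rel4} to rewrite each cubic monomial as a rational multiple of one fixed ordered monomial, and then check that the resulting scalar coefficient vanishes identically as a function of $z_1,z_2,w,p,q$. For the even-node Serre relations this coefficient cancels on its own; for those attached to the odd node $m$ the relation carries the spectral prefactor $(z_1 p^{\mp1}-z_2 q^{\pm1})$, and the cancellation then requires the full symmetrization $\{z_1\leftrightarrow z_2\}$.

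The hard part, I expect, is not any single identity but the systematic bookkeeping of signs and parameters. Because $p$ and $q$ are independent, the $p\leftrightarrow q$ symmetry of the one-parameter theory is lost and the recurring factor $q p^{-1}$ no longer collapses to $1$, so it must be tracked through every reordering; simultaneously the $\mathbb{Z}_2$-grading injects signs through $\theta$ and through the permutation $P$ whenever the odd index $m$ is involved. Keeping these two effects mutually consistent---in particular confirming that the exceptional node relations ($X_m^{\pm}(z)^2$ vanishing via $\{X_m^{\pm}(z),X_m^{\pm}(w)\}=0$, the anti-commutator $\{X_m^+,X_m^-\}$, and the prefactored Serre relations) emerge correctly and agree across the overlapping blocks---is the principal difficulty. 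The second delicate point is the extraction of the $\delta$-function terms in the $[X_i^+(z),X_j^-(w)]$ relations from the $L^+L^-$ relation \eqref{e:LL2}: these arise as the difference of the two admissible expansion directions of the rational $R$-matrix kernel, and obtaining the correct arguments $\delta\!\left(\tfrac{w}{z}q^{\pm c}\right)$ with their shifts $z_{\pm}=zq^{\pm c/2}$ requires care.
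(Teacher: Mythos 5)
Your proposal is correct and follows essentially the same route as the paper: the base cases $N=2$ (Types 1--3, with the $\theta$-twisted relations \eqref{RLL1.1}--\eqref{RLL1.2} producing the anti-commutators at the odd node), the $N=3$ computation via index specialization in \eqref{RLL rel1 N=3}--\eqref{RLL rel2 N=3} yielding the adjacent-node relations \eqref{X1X2 rel1}--\eqref{X1X2 rel4} and the coefficient-cancellation verification of the Serre relations, and the inductive passage from rank $N$ to $N+1$ reducing to the long-distance relations between $k_1^{\pm}, e_1^{\pm}, f_1^{\pm}$ and $k_N^{\pm}, e_{N-1}^{\pm}, f_{N-1}^{\pm}$. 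Your added remarks on the overlap consistency via uniqueness of the Gauss decomposition and on the expansion-direction origin of the $\delta$-function terms are sound and consistent with the paper's (largely implicit) treatment of those points.
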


	\section{HOPF SUPERALGEBRA STRUCTRUE OF $U_{p, q}(\widehat{\mathfrak{gl}}(m|n))$ }
	\begin{theorem}
		The algebra $U_{p, q}(\widehat{\mathfrak{gl}}(m|n))$  has a Hopf superalgebra structure given by the following formulae.
		
		\smallskip
		\textbf{Coproduct} $\Delta:$
		\begin{align*}
			\Delta\left(q^c\right)&=q^c \otimes q^c, \qquad  \Delta\left(p^c\right)=p^c \otimes p^c,\\
\Delta\left(k_j^{\pm}(z)\right)&=k_j^{\pm}\left(z q^{\frac{\pm c_2}{2}}\right) \otimes k_j^{\pm}\left(z q^{\mp\frac{c_1}{2}}\right),\quad j=1,\cdots,m+n,\\
			\Delta\left(X_i^{+}(z)\right)&=X_i^{+}(z) \otimes 1+\psi_i\left(z q^{\frac{c_1}{2}}\right) \otimes X_i^{+}\left(z q^{c_1}\right),\\
			\Delta\left(X_i^{-}(z)\right)&=1 \otimes X_i^{-}(z)+X_i^{-}\left(z q^{c_2}\right) \otimes \phi_i\left(z q^{\frac{c_2}{2}}\right), \quad i=1,2, \cdots, m+n-1,
		\end{align*}
		where $
		c_1=c \otimes 1, c_2=1 \otimes c$,
$\phi_i(z)=k_{i+1}^{+}(z) k_i^{+}(z)^{-1}$ and $\phi_i(z)=k_{i+1}^{+}(z) k_i^{+}(z)^{-1}$ are the following generating functions:
		$$
		\begin{aligned}
			& \psi_i(z)=\sum_{m \in \mathbb{Z}} \psi_i(m) z^{-m}=\left(\ell_{i i}^{-(0)}\right)^{-1} \ell_{i+1, i+1}^{-(0)} \exp \left(-\left(p-q^{-1}\right) \sum_{k>0} H_{-i k} q^{\frac{c}{2}} z^k\right), \\
			& \varphi_i(z)=\sum_{m \in \mathbb{Z}} \varphi_i(m) z^{-m}=\left(\ell_{i i}^{+(0)}\right)^{-1} \ell_{i+1, i+1}^{+(0)} \exp \left(\left(p-q^{-1}\right) \sum_{k>0} H_{i k} q^{-\frac{c}{2}} z^{-k}\right) .
		\end{aligned}
		$$

		\smallskip
		\textbf{Counit} $\epsilon:$
		\begin{align*}
			\epsilon\left(q^c\right)=1,\quad \epsilon\left(p^c\right)=1, \quad \epsilon\left(k_j^{ \pm}(z)\right)=1, \quad \epsilon\left(X_i^{ \pm}(z)\right)=0.
		\end{align*}
		
		\textbf{Antipode} $S:$
		\begin{align*}
			S\left(q^c\right)=q^{-c},\quad S\left(p^c\right)&=p^{-c}, \quad S\left(k_j^{ \pm}(z)\right)=k_j^{ \pm}(z)^{-1},\\
			S\left(X_i^{+}(z)\right)&=-\psi_i\left(z q^{-\frac{c}{2}}\right)^{-1} X_i^{+}\left(z q^{-c}\right),\\
			S\left(X_i^{-}(z)\right)&=-X_i^{-}\left(z q^{-c}\right) \phi_i\left(z q^{-\frac{c}{2}}\right)^{-1}.
		\end{align*}	
	\end{theorem}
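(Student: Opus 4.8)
The plan is to verify directly that the three maps $\Delta$, $\epsilon$ and $S$, defined above on the Drinfeld current generators, extend to the (completed) superalgebra and satisfy the Hopf superalgebra axioms. It is worth stressing at the outset that this is the \emph{Drinfeld-type} coproduct, which is genuinely different from the $RLL$/$RS$ coproduct of $U(\mathcal R)$ in \eqref{e:Hopf1}; it is therefore not inherited from the Gauss decomposition of \eqref{e:Hopf1} and must be checked by hand. The bulk of the work is to prove that $\Delta$ is a homomorphism of superalgebras, i.e. that it preserves every defining relation of the preceding theorem; coassociativity, the counit identities and the antipode identities are then comparatively short checks on generators. A preliminary step that organizes everything is to record the coproducts of the auxiliary series $\psi_i,\phi_i$: since these are ratios of the Cartan currents $k_j^{\pm}$, the formula for $\Delta(k_j^{\pm})$ forces $\Delta(\psi_i)$ and $\Delta(\phi_i)$ to be again ``grouplike'' tensors of the form $\psi_i(zq^{\cdots})\otimes\psi_i(zq^{\cdots})$, and similarly for $\phi_i$.

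For the homomorphism property I would go relation by relation, grouped by difficulty. The Cartan relations \eqref{kiki}--\eqref{kikj} hold because $\Delta(k_j^{\pm})$ is a tensor product of grouplike elements and the exchange coefficients depend only on the ratio $z/w$; the central shifts $q^{\pm c/2}$ applied in each tensor factor are consistent, so the scalar coefficients reproduce themselves after applying $\Delta$. The mixed Cartan--current relations \eqref{kjXi rel2}--\eqref{kiXm rel1} are checked by inserting the two summands of $\Delta(X_i^{\pm})$ and using the already-established $k$--$X$ and $k$--$k$ (hence $k$--$\psi$) commutations; the key point is that the argument $zq^{c_1/2}$ inside $\psi_i(zq^{c_1/2})$ is exactly what makes the central-charge-dependent factors cancel. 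For the same-sign relations \eqref{com rels1 Xi}--\eqref{com rels7 Xi} among the currents $X^{\pm}$, applying $\Delta$ produces cross terms in which a $\psi_i$ (resp.\ $\phi_i$) must be commuted past an $X^{\pm}$, and the $\psi$--$X$ (resp.\ $\phi$--$X$) commutation supplies precisely the rational coefficient needed to match the two sides.

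The two genuinely delicate points are the mixed relation between $X_i^+(z)$ and $X_j^-(w)$ carrying the $\delta$-function terms, and the Serre relations \eqref{ser rel1 N}--\eqref{ser rel8 N}. For the mixed relation, applying $\Delta$ generates four terms; the contributions without $\delta$-functions must cancel in pairs after $\psi_i$ and $\phi_j$ are commuted through, while the surviving $\delta$-function contributions have to be matched by carefully tracking how $\delta(\tfrac{w}{z}q^{\pm c})$ transforms under the spectral shifts $z\mapsto zq^{c_1/2}$, $w\mapsto wq^{c_2}$, etc.\ across the two tensor factors, and then reassembling $\Delta\bigl(k_{i+1}^{\pm}k_i^{\pm,-1}\bigr)$ on the right-hand side. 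This $\delta$-function bookkeeping with central shifts is the step I expect to be the main obstacle. For the Serre relations one substitutes the coproducts, expands, and uses the same quadratic $X$--$X$ and $\psi$--$X$ relations to reduce every resulting term to a single ordered monomial; the vanishing of the total coefficient then reduces to the very scalar identities already checked in the $N=3$ verifications above, so the only new content is the homogeneous ``$\psi\otimes X$'' and ``$X\otimes X$'' pieces, which vanish by the same polynomial identities.

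Finally, coassociativity is verified on generators: on $q^c,p^c$ and $k_j^{\pm}$ it is immediate from composing the spectral shifts, and on $X_i^{\pm}$ it follows once $\Delta(\psi_i),\Delta(\phi_i)$ are known, since both iterated coproducts collapse to the same three-fold expression $X_i^+\otimes 1\otimes 1+\psi_i\otimes X_i^+\otimes 1+\psi_i\otimes\psi_i\otimes X_i^+$ (with the appropriate central shifts), and dually for $X_i^-$. The counit identities are checked termwise from $\epsilon(\psi_i)=\epsilon(\phi_i)=1$ and $\epsilon(X_i^{\pm})=0$. The antipode identities $m(S\otimes \mathrm{id})\Delta=\eta\epsilon=m(\mathrm{id}\otimes S)\Delta$ reduce, on $X_i^{\pm}$, to the grouplike property $S(\psi_i(z))\psi_i(z)=1$ (and its $\phi_i$ analogue) together with a matching of the spectral shifts in the two surviving terms, which then cancel, completing the verification.
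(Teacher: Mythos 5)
Your proposal follows essentially the same route as the paper's own proof: a direct relation-by-relation verification that $\Delta$ is a superalgebra homomorphism (exploiting the grouplike form of $\Delta(k_j^{\pm})$ and hence of $\Delta(\psi_i)$, $\Delta(\phi_i)$, the $\delta$-function bookkeeping with central shifts for the mixed $X^{+}$--$X^{-}$ relation, and the reduction of the Serre relations to the same vanishing scalar coefficient $d(p,q)$ already computed in the $N=3$ verifications), followed by generator-level checks of the counit and antipode axioms. The only cosmetic differences are that you state coassociativity explicitly where the paper leaves it implicit, while the paper additionally records the super-antihomomorphism property $S(ab)=(-1)^{[a][b]}S(b)S(a)$ on the mixed relation (via the commutativity $\phi\,\psi=\psi\,\phi$), a step your appeal to the grouplike property of $\psi_i,\phi_i$ covers in substance.
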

	\begin{proof} Note that the tensor product is $\mathbb Z_2$-graded and the antipode is extended to the whole superalgebra, namely
		\begin{gather*}
			(a \otimes b)(c \otimes d)=(-1)^{[b][c]}(a c \otimes b d)
		\end{gather*}
		where $[a] \in \mathbb{Z}_2$ denotes the grading of the element $a$.
		
		We first prove  the comultiplication is a superalgebra homomorphism.
		
		For $m<i \leq m+n,$ thanks to relation (\ref{kiki}), we have
		$$
		\begin{aligned}
			& \Delta\left(\frac{w_{-} p-z_{+} q^{-1}}{z_{+} p-w_{-} q^{-1}} k_i^{+}(z) k_i^{-}(w)\right)=\frac{w q^{\frac{-c_1-c_2}{2}} p-z q^{\frac{c_1+c_2}{2}} q^{-1}}{z q^{\frac{c_1+c_2}{2}} p-w q^{\frac{-c_1-c_2}{2}} q^{-1}} \\
			& \quad \times\left(k_i^{+}\left(z q^{\frac{c_2}{2}}\right) \otimes k_i^{+}\left(z q^{-\frac{c_1}{2}}\right)\right)\left(k_i^{-}\left(w q^{-\frac{c_2}{2}}\right) \otimes k_i^{-}\left(w q^{\left.\frac{c_1}{2}\right)}\right)\right) \\
			& \quad=\frac{w q^{\frac{c_1+c_2}{2}} p-z q^{\frac{-c_1-c_2}{2}} q^{-1}}{z q^{\frac{-c_1-c_2}{2}} p-w q^{\frac{c_1+c_2}{2}} q^{-1}} \\
			& \quad \times\left(k_i^{-}\left(w q^{-\frac{c_2}{2}}\right) \otimes k_i^{-}\left(w q^{\frac{c_1}{2}}\right)\right)\left(k_i^{+}\left(z q^{\frac{c_2}{2}}\right) \otimes k_i^{+}\left(z q^{-\frac{c_1}{2}}\right)\right) \\
			& \quad=\Delta\left(\frac{w_{+} p-z_{-} q^{-1}}{z_{-} p-w_{+} q^{-1}} k_2^{-}(w) k_2^{+}(z)\right) .
		\end{aligned}
		$$
		Other relations regarding $k_i^{ \pm}(z)$ can be proved similarly.
		
		If $j-i \leq-2$, it is easy to check the following commutation relation between $\phi_j(z)$ and $X_i^{+}(w)$:
		\begin{gather*}
			\phi_j(z) X_i^{ -}(w) \phi_j(z)^{-1}=X_i^{ -}(w).
		\end{gather*}
		Then we compute that
		$$
		\begin{aligned}
			& \Delta\left(\phi_j(z) X_i^{-}(w) \phi_j(z)^{-1}\right)=\left(\phi_j\left(z q^{\frac{c_2}{2}}\right) \otimes \phi_j\left(z q^{-\frac{c_1}{2}}\right)\right) \\
			& \quad \times\left(1 \otimes X_i^{-}(w)+X_i^{-}\left(w q^{c_2}\right) \otimes \phi_i\left(w q^{\frac{c_2}{2}}\right)\right)\left(\phi_j^{-1}\left(z q^{\frac{c_2}{2}}\right) \otimes \phi_j^{-1}\left(z q^{-\frac{c_1}{2}}\right)\right) \\
			& \quad=1 \otimes X_i^{-}(w)+X_i^{-}\left(w q^{c_2}\right) \otimes \phi_i\left(w q^{\frac{c_2}{2}}\right) \\
			& \quad=\Delta\left(X_i^{-}(w)\right).
		\end{aligned}
		$$
		Similarly we can check other relations
between $\phi_j(z)$ and $X_i^{ \pm}(w)$, $\psi_i(z)$ and $X_i^{ \pm}(w)$.
		By (\ref{ki+1Xi rel2}) and (\ref{kiXm rel1}), we see that 
		\begin{gather*}
			\psi_m\left(z q^{\frac{c_1}{2}}\right) X_m^{ +}(w)=X_m^{ +}(w) \psi_m\left(z q^{\frac{c_1}{2}}\right),
		\end{gather*}
		which in turn implies that 
		$$
		\begin{aligned}
			\Delta\left(\left\{X_m^{+}(z), X_m^{+}(w)\right\}\right)= & \Delta\left(X_m^{+}(z)\right) \Delta\left(X_m^{+}(w)\right)+\Delta\left(X_m^{+}(w)\right) \Delta\left(X_m^{+}(z)\right) \\
			= & X_m^{+}(z) X_m^{+}(w) \otimes 1+X_m^{+}(z) \psi_m\left(w q^{\frac{c_1}{2}}\right) \otimes X_m^{+}\left(w q^{c_1}\right) \\
			& -\psi_m\left(z q^{\frac{c_1}{2}}\right) X_m^{+}(w) \otimes X_m^{+}\left(z q^{c_1}\right) \\
			& +X_m^{+}(w) X_m^{+}(z) \otimes 1+X_m^{+}(w) \psi_m\left(z q^{\frac{c_1}{2}}\right) \otimes X_m^{+}\left(z q^{c_1}\right) \\
			& -\psi_m\left(w q^{\frac{c_1}{2}}\right) X_m^{+}(z) \otimes X_m^{+}\left(w q^{c_1}\right) \\
			& +\psi_m\left(z q^{\frac{c_1}{2}}\right) \psi_m\left(w q^{\frac{c_1}{2}}\right) \otimes X_m^{+}\left(z q^{c_1}\right) X_m^{+}\left(w q^{c_1}\right) \\
			& +\psi_m\left(w q^{\frac{c_1}{2}}\right) \psi_m\left(z q^{\frac{c_1}{2}}\right) \otimes X_m^{+}\left(w q^{c_1}\right) X_m^{+}\left(z q^{c_1}\right) \\
			= & \left\{X_m^{+}(z), X_m^{+}(w)\right\} \otimes 1+\psi_m\left(z q^{\frac{c_1}{2}}\right) \psi_m\left(w q^{\frac{c_1}{2}}\right) \\
			& \otimes\left\{X_m^{+}\left(z q^{c_1}\right), X_m^{+}\left(w q^{c_1}\right)\right\}=0.
		\end{aligned}
		$$
		The coproduct relations between $X_i^{ \pm}(z)$ and $X_j^{ \pm}(w)$ can be proved similarly.
		The following relations can be derived from (\ref{kiXm rel1}): 
		\begin{gather*}
			\psi_m\left(z q^{\frac{c_1}{2}}\right) X_m^{-}\left(w q^{c_2}\right)=X_m^{-}\left(w q^{c_2}\right) \psi_m\left(z q^{\frac{c_1}{2}}\right),
			\\
			\phi_m\left(w q^{\frac{c_2}{2}}\right) X_m^{+}\left(z q^{c_1}\right)=X_m^{+}\left(z q^{c_1}\right) \phi_m\left(w q^{\frac{c_2}{2}}\right).
		\end{gather*}
		Then we verify that 
		$$
		\begin{aligned}
			\Delta\left(\left\{X_m^{+}(z),\right.\right. &\left.\left.X_m^{-}(w)\right\}\right)\\
			&= \left\{X_m^{+}(z), X_m^{-}\left(w q^{c_2}\right)\right\} \otimes \phi_m\left(w q^{c_2}\right)\\
			&\quad +\psi_m\left(z q^{\frac{c_1}{2}}\right) \otimes\left\{X_m^{+}\left(z q^{c_1}\right), X_m^{-}(w)\right\}  \\
			&=  \left(p{-}q^{-1}\right)\Bigl[\delta\left(\frac{w}{z} q^{c_1+c_2}\right) \phi_m\left(w q^{\frac{c_2}{2}+\frac{c_1+c_2}{2}}\right) \phi_m\left(w q^{-\frac{c_1}{2}+\frac{c_1+c_2}{2}}\right) \\
			& \quad -\delta\left(\frac{w}{z} q^{-c_1-c_2}\right) \psi_m\left(z q^{\frac{-c_2}{2}+\frac{c_1+c_2}{2}}\right) \psi_m\left(w q^{\frac{c_1}{2}+\frac{c_1+c_2}{2}}\right)\Bigr] \\
			&=  \left(p-q^{-1}\right) \Delta\left(\delta\left(\frac{w}{z} q^c\right) \phi_m\left(w_{+}\right)-\delta\left(\frac{w}{z} q^{-c}\right) \psi_m\left(z_{+}\right)\right) .
		\end{aligned}
		$$
		Finally, we check the Serre relations, take (\ref{ser rel1 N}) as an example.
		$$
		\begin{aligned}
			&\Delta\left(X_i^{+}\left(z_1\right) X_i^{+}\left(z_2\right) X_{i+1}^{+}(w)\right)\\
			&=X_i^{+}\left(z_1\right) X_i^{+}\left(z_2\right) X_{i+1}^{+}(w) \otimes 1+X_i^{+}\left(z_1\right) X_i^{+}\left(z_2\right) \psi_{i+1}\left(w q^{\frac{q}{2}}\right) \otimes X_{i+1}^{+}\left(w q^{c_1}\right)\\
			&\quad-X_i^{+}\left(z_1\right) \psi_i\left(z_2 q^{\frac{c_1}{2}}\right) X_{i+1}^{+}(w) \otimes X_i^{+}\left(z_2 q^{c_1}\right)\\
			&\quad+X_i^{+}\left(z_1\right) \psi_i\left(z_2 q^{\frac{c_1}{2}}\right) \psi_{i+1}\left(w q^{\frac{c_1}{2}}\right) \otimes X_i^{+}\left(z_2 q^{c_1}\right) X_{i+1}^{+}\left(w q^{c_1}\right)\\
			&\quad+\psi_i\left(z_1 q^{\frac{c_1}{2}}\right) X_i^{+}\left(z_2\right) X_{i+1}^{+}(w) \otimes X_i^{+}\left(z_1 q^{c_1}\right)\\
			&\quad-\psi_i\left(z_1 q^{\frac{c_1}{2}}\right) X_i^{+}\left(z_2\right) \psi_{i+1}\left(w q^{\frac{c_1}{2}}\right) \otimes X_i^{+}\left(z_1 q^{c_1}\right) X_{i+1}^{+}\left(w q^{c_1}\right)\\
			&\quad+\psi_i\left(z_1 q^{\frac{c_1}{2}}\right) \psi_i\left(z_2 q^{\frac{c_1}{2}}\right) X_{i+1}^{+}(w) \otimes X_i^{+}\left(z_1 q^{c_1}\right) X_i^{+}\left(z_2 q^{c_1}\right)\\
			&\quad+\psi_i\left(z_1 q^{\frac{c_1}{2}}\right) \psi_i\left(z_2 q^{\frac{c_1}{2}}\right) \psi_{i+1}\left(w q^{\frac{c_1}{2}}\right) \otimes X_i^{+}\left(z_1 q^{c_1}\right) X_i^{+}\left(z_2 q^{c_1}\right) X_{i+1}^{+}\left(w q^{c_1}\right),
		\end{aligned}
		$$
		and also we can get $\Delta\left(X_i^{+}\left(z_1\right) X_{i+1}^{+}(w) X_i^{+}\left(z_2\right)\right)$, $\Delta\left(X_{i+1}^{+}(w) X_i^{+}\left(z_1\right) X_i^{+}\left(z_2\right)\right)$ and corresponding equations by switching $z_1$ and $z_2$.
		Then
		$$
		\begin{aligned}
			\left(p^{-1} q\right) \Delta\left(X_i^{+}\left(z_1\right) X_i^{+}\left(z_2\right) X_{i+1}^{+}(w)\right)&-\left(q+p^{-1}\right) \Delta\left(X_i^{+}\left(z_1\right) X_{i+1}^{+}(w) X_i^{+}\left(z_2\right)\right) \\
			&+\Delta\left(X_{i+1}^{+}(w) X_i^{+}\left(z_1\right) X_i^{+}\left(z_2\right)\right)+\left\{z_1 \leftrightarrow z_2\right\}
		\end{aligned}
		$$
		$$
		=d(p, q) \Delta\left(X_i^{+}\left(z_1\right) X_i^{+}\left(z_2\right) X_{i+1}^{+}(w)\right),
		$$
		where $d(p, q)$ is
		$$
		\begin{aligned}
			& \left(p^{-1} q{-}(q-p^{-1}) \frac{(z_2{-}w) q p^{-1}}{z_2 q{-}w p^{-1}}+\frac{\left(z_1{-}w\right) q p^{-1}}{z_1 q{-}w p^{-1}} \frac{\left(z_2{-}w\right) q p^{-1}}{z_2 q{-}w p^{-1}}+p^{-1} q \frac{z_2 q^{-1}{-}z_1 p}{z_2 p{-}z_1 q^{-1}}\right) \\
			& +\left(-(q{-}p^{-1}) \frac{\left(z_1{-}w\right) q p^{-1}}{z_1 q{-}w p^{-1}} \frac{z_2 q^{-1}{-}z_1 p}{z_2 p{-}z_1 q^{-1}}+\frac{\left(z_2{-}w\right) q p^{-1}}{z_2 q{-}w p^{-1}} \frac{(z_1{-}w) q p^{-1}}{z_1 q{-}w p^{-1}} \frac{z_2 q^{-1}{-}z_1 p}{z_2 p{-}z_1 q^{-1}}\right).
		\end{aligned}
		$$
	which can be directly verified to be zero. 
Next we list the required relationships below.
		\begin{gather*}
			\left(z q^{\mp 1}-w p^{ \pm 1}\right) X_i^{\mp}(z) X_i^{\mp}(w)=\left(z p^{ \pm 1}-w q^{\mp 1}\right) X_i^{\mp}(w) X_i^{\mp}(z), \quad i<m,
		\end{gather*}
		for $i<m-1$,
		\begin{gather*}
			\psi_i\left(z_2 q^{\frac{c_1}{2}}\right) X_{i+1}^{+}(w)=\frac{z_2 p-w q^{-1}}{z_2-w} X_{i+1}^{+}(w) \psi_i\left(z_2 q^{\frac{c_1}{2}}\right) ,
		\end{gather*}
		for $i<m$,
		\begin{gather*}
			\psi_i\left(z_2 q^{\frac{c_1}{2}}\right) X_i^{+}\left(z_1\right)=\frac{z_2 q^{-1}-z_1 p}{z_2 p-z_1 q^{-1}} X_i^{+}\left(z_1\right) \psi_i\left(z_2 q^{\frac{c_1}{2}}\right).
		\end{gather*}
		
		We have therefore proved that the comultiplication is a superalgebra homomorphism. Next we prove
that the antipode is a superalgebraic homomorphism, which we use the $N=2$ case to show our argument. Note that the antipode obeys
$S(a b)=(-1)^{[a][b]} S(b) S(a)$, where $[a] \in \mathbb{Z}_2$ is the degree of $a$.
		
		For $m=1$ and $n=1$, We first prove $\phi\left(w q^{-\frac{c}{2}}\right) \psi\left(z q^{-\frac{c}{2}}\right)=\psi\left(z q^{-\frac{c}{2}}\right) \phi\left(w q^{-\frac{c}{2}}\right)$, which is straightforward.
		\begin{gather*}
			\phi\left(w q^{-\frac{c}{2}}\right) \psi\left(z q^{-\frac{c}{2}}\right)=k_2^{+}\left(w q^{-\frac{c}{2}}\right) k_1^{+}\left(w q^{-\frac{c}{2}}\right)^{-1} k_2^{-}\left(z q^{-\frac{c}{2}}\right) k_1^{-}\left(z q^{-\frac{c}{2}}\right)^{-1}=\\
			\frac{w q^{-c} p{-}z q^{-1}}{w q^{-c}{-}z} \frac{w{-}z q^{-c}}{w p{-}z q^{-c} q^{-1}} \frac{w p{-}z q^{-c} q^{-1}}{z q^{-c} p{-}w q^{-1}} \frac{z p{-}w q^{-c} q^{-1}}{w q^{-c} p{-}q^{-1} z} \frac{z q^{-c} p{-}w q^{-1}}{z q^{-c}{-}w} \frac{z{-}w q^{-c}}{z p{-}w q^{-c} q^{-1}}\\
			\times k_2^{-}\left(z q^{-\frac{c}{2}}\right) k_1^{-}\left(z q^{-\frac{c}{2}}\right)^{-1} k_2^{+}\left(w q^{-\frac{c}{2}}\right) k_1^{+}\left(w q^{-\frac{c}{2}}\right)^{-1}=\psi\left(z q^{-\frac{c}{2}}\right) \phi\left(w q^{-\frac{c}{2}}\right).
		\end{gather*}
		Therefore we have in this case. 
		$$
		S\left(\left\{X_1^{+}(z), X_1^{-}(w)\right\}\right)=-S\left(X_1^{-}(w)\right) S\left(X_1^{+}(z)\right)-S\left(X_1^{+}(z)\right) S\left(X_1^{-}(w)\right)
		$$
		$$
		\begin{aligned}
			= & -\psi_1\left(z q^{-\frac{c}{2}}\right)^{-1} \phi_1\left(w q^{-\frac{c}{2}}\right)^{-1}\left\{X_1^{+}\left(z q^{-c}\right), X_1^{-}\left(w q^{-c}\right)\right\} \\
			= & -\psi_1\left(z q^{-\frac{c}{2}}\right)^{-1} \phi_1\left(w q^{-\frac{c}{2}}\right)^{-1}\left(p-q^{-1}\right) \\
			& \times\left(\delta\left(\frac{w}{z} q^c\right) \phi_1\left(w^{-\frac{c}{2}}\right)-\delta\left(\frac{w}{z} q^{-c}\right) \psi_1\left(z q^{-\frac{c}{2}}\right)\right) \\
			= & \left(p-q^{-1}\right)\left(\delta\left(\frac{w}{z} q^{-c}\right) \phi_1\left(w^{-\frac{c}{2}}\right)^{-1}-\delta\left(\frac{w}{z} q^c\right) \psi_1\left(z q^{-\frac{c}{2}}\right)^{-1}\right) \\
			= & \left(p-q^{-1}\right) S\left(\delta\left(\frac{w}{z} q^c\right) \phi_1\left(w^{\frac{c}{2}}\right)-\delta\left(\frac{w}{z} q^{-c}\right) \psi_1\left(z q^{\frac{c}{2}}\right)\right) .
		\end{aligned}
		$$
		The other relations of the antipode
are shown exactly in the same manner. 
		Let
		$
		M: U_{p, q}(\widehat{\mathfrak{gl}}(m|n)) \otimes U_{p, q}(\widehat{\mathfrak{gl}}(m|n)) \rightarrow U_{p, q}(\widehat{\mathfrak{gl}}(m|n))
		$  be the multiplication given by tensor product.  We can easily check that
		$$
		\begin{aligned}
			& M(1 \otimes \epsilon) \Delta=i d=M(\epsilon \otimes 1) \Delta, \\
			& M(1 \otimes S) \Delta=\epsilon=M(S \otimes 1) \Delta .
		\end{aligned}
		$$
		Thus we have shown that the coproduct, the counit and the antipode give a Hopf superalgebra
		structure.
	\end{proof}
	\begin{remark}
		(1) The universal $R$-matrix of the two parameter quantum affine superalgebra of type $A$ can be realized as the Casimir element of certain Hopf pairing, with the help of explicit coproduct formula of all the Drinfeld loop generators, which will be considered in our next paper.
		
		(2) The $RLL$ realization of quantum affine superalgebras of $(BCD)_n^{(1)}$-types remain open. Recently, we have developed a uniform
 method for the two-parameter quantum affine (super) algebra $U_{r, s}\left[\operatorname{osp}(1|2)^{(1)}\right]$, $U_{r, s}\left[\operatorname{osp}(2|2)^{(2)}\right]$ and $U_{r, s}(A_2^{(2)})$ \cite{HZ} $($in fact, the one parameter case only provided the Drinfeld realization without proof$)$. Using the homomorphism in \cite{ZHJ} and the results of this paper,
 we believe that the quantum affine superalgebras of $(BCD)_n^{(1)}$-types (both one- and two-parameter cases) can be treated similarly.
	\end{remark}
	\section*{\textbf{ACKNOWLEDGMENTS}}
	The paper is supported by the NNSFC (Grant Nos.
	12171155, 12171303) and in part by the Science and Technology Commission of Shanghai Municipality (No. 22DZ2229014)
	and Simons Foundation TSM-00002518
		\section*{\textbf{AUTHOR DECLARATIONS}}
		\subsection*{\textbf{Conflict of Interest}}
		The authors have no conflicts to disclose.
			\subsection*{\textbf{Author Contributions}}
\textbf{Naihong Hu}: Supervision (equal) Validation (equal) Writing - review and editing (equal).
\textbf{Naihuan Jing}: Validation (equal) Writing - review and editing (equal).
\textbf{Xin Zhong}:Conceptualization (equal) Formal analysis (equal) Methodology (equal) Writing - original draft (equal).
	\section*{\textbf{DATA AVAILABILITY}}
	All data that support the findings of this study are included within the article.

\end{document}